\newtheorem{definition}{Definition}[section]
\newtheorem{proposition}[definition]{Proposition}
\newtheorem{remark}[definition]{Remark}
\newtheorem{theorem}[definition]{Theorem}
\newtheorem{construction}[definition]{Construction}
\newtheorem{example}[definition]{Example}
\DeclareMathOperator{\ord}{ord}
\DeclareMathOperator{\Hom}{Hom}
\DeclareMathOperator{\rk}{rk}
\DeclareMathOperator{\Cl}{Cl}
\begin{document}
	\title{Kanev and Todorov surfaces in toric 3-folds}
	\author{Julius Giesler \\ University of T\"ubingen}
	\date{\today}
	\maketitle
	\begin{abstract} 
	In the first part of this article we show for some examples of surfaces of general type in toric $3$-folds how to construct minimal and canonical models by toric methods explicitly. The examples we study turn out to be surfaces of general type, namely so called Kanev surfaces and Todorov surfaces. We show how properties of our examples of surfaces could be derived directly from properties of some polytopes and we compute the singularities of their canonical models.
	\end{abstract}
	
	\maketitle
	\section{Introduction}
	In this article we study some surfaces of general type that arise as hypersurfaces in toric 3-folds. This article illustrates results from the articles (\cite{Bat20}) and (\cite{Gie21}) via concrete examples. \\
	We start with some lattice polytope $\Delta$ and a Laurent polynomial $f$ which has $\Delta$ as its Newton polytope and is nondegenerate with respect to $\Delta$. For this we take $\Delta$ to be a $3$-dimensional canonical Fano polytope, that is $\Delta$ contains just $(0,0,0)$ as an interior lattice point and the vertices of $\Delta$ are primitive lattice vectors. Additionaly since there are still quite a lot of such polytopes we restrict to those with 
	\[ \dim \, F(\Delta) = 3, \]
	where $F(\Delta)$ denotes the Fine interior of $\Delta$ (see Definition \ref{definition_Fine_interior}). $F(\Delta)$ is a rational polytope that can be constructed from the Newton polytope $\Delta$. The restriction to $\dim \, F(\Delta) = 3$ has the effect that if we let $Z_f := \{f=0\} \subset (\mathbb{C}^*)^3$, then $Z_f$ is birational to a surface of general type. \\
	There are $49$ canonical Fano polytopes $\Delta$ with $\dim \, F(\Delta) = 3$. Then we ask how to find a minimal model of $Z_f$, that is a smooth projective surface $Y$, which is birational to $Z_f$ and with $K_Y$ nef. \\
	For this we recall in section \ref{section_toric_methods_Kanev_surfaces} the Definition of the Fine interior $F(\Delta)$, the canonical closure $C(\Delta)$, the Minkowski sum 
	\[ \tilde{\Delta} := C(\Delta) + F(\Delta)  \]
	and of a simplicial fan $\Sigma$ refining the normal fan of $\Sigma_{\tilde{\Delta}}$. 
	These constructions are necessary in order to specify the toric variety in which a minimal model of the surface $Z_f$ is contained.	To be more precise we take the projective toric varieties to the normal fans of $F(\Delta)$, $C(\Delta)$ and $\tilde{\Delta}$ as well as the projective toric variety to the fan $\Sigma$ and call them $\mathbb{P}_{F(\Delta)}, \, \mathbb{P}_{C(\Delta)}, \, \mathbb{P}_{\tilde{\Delta}}$ and $\mathbb{P}_{\Sigma}$. Then we obtain a diagram
		\begin{equation}  \label{diagram_toric_morphisms}
		\begin{tikzcd}
			& \mathbb{P}_{\Sigma} \arrow[swap]{d}{\pi} \\
			& \mathbb{P}_{\tilde{\Delta}} \arrow[swap]{dl}{\rho} \arrow{dr}{\theta} \\
			\mathbb{P}_{C(\Delta)} && \mathbb{P}_{F(\Delta)}
		\end{tikzcd}
	\end{equation}
	
	of birational toric morphisms, where $\theta$ is birational since we assume \\ $\dim \, F(\Delta) = 3$. Let $Z_{\Sigma}$ denote the closure of $Z_f$ in $\mathbb{P}_{\Sigma}$, then $Z_{\Sigma}$ gets a minimal surface of general type and of the closure $Z_{F(\Delta)}$ of $Z_f$ in $\mathbb{P}_{F(\Delta)}$ gets a canonical model of $Z_{\Sigma}$. Since we always assume $f$ to be nondegenerate with respect to its Newton polytope $\Delta$ we are able to compute the singularities of $Z_{F(\Delta)}$ and of the closure $Z_{\tilde{\Delta}} \subset \mathbb{P}_{\tilde{\Delta}}$ of $Z_f$ at least if $\Delta$ is canonically closed, that is $\Delta = C(\Delta)$. \\
	Apart from just computing the singularities we also illustrate these results as well as the $49$ polytopes in some pictures. Besides	we provide some combinatorial classification of these $49$ polytopes, that is we bring them onto a normal form and divide them into $5$ classes $a), \, b), \, c), \, d)$ and $e)$ which could be studied separately.
	\\
	In section \ref{chapter_constructing_Kanev_Todorov_surfaces} we study invariants of the resulting minimal surfaces and in this way identify our surfaces with so called \textit{Kanev } surfaces in the cases $a)$ and $b)$ and \textit{Todorov} surfaces in the cases $c)$, $d)$ and $e)$.
	\\
	Kanev surfaces (also known as Kunev or Kynev surfaces) have invariants
	\[ p_{g}(Y) = 1, \quad K_Y^2 = 1, \]
	and were studied in (\cite{Cat78}, \cite{Us87}, \cite{Tod80}). Todorov surfaces have invariants
	\[ p_{g}(Y) = 1, \quad q(Y) = 0, \quad K_Y^2 = 2 \]
	and were studied in (\cite{CD89}). Both of these surfaces became interesting from the point of view of Torelli type Theorems: Some of them for example fail the infinitesimal Torelli Theorem.  \\
	We remark that the plurigenera of $Z_{\Sigma}$ could be calculated via $2$ different methods: Either via general formulas for plurigenera of minimal algebraic surfaces of general type or by counting lattice points and interior lattice points on $F(\Delta)$. It is known that for $Y$ a Kanev or Todorov surface $2K_Y$ is basepointfree (\cite{Cat78}, \cite{CD89}). If $Y=Z_{\Sigma}$ lies in the toric $3$-fold $\mathbb{P}_{\Sigma}$ this is gotten for free by the adjunction formula and an argument using a resolution of singularities of $\mathbb{P}_{\Sigma}$ (see \cite[Prop.6.2]{Gie21}). We further deal with Kanev's original example of a Kanev surface.

	\section{Combinatorial classification of 49 canonical Fano polytopes \texorpdfstring{$\Delta$}{x}  with \texorpdfstring{\bfseries $\dim \, F(\Delta) = 3$}{x}}
	\label{section_toric_methods_Kanev_surfaces}
	\subsection{General background and the Fine interior} \label{subsection_general_background_Fine_interior}
	\underline{Notation} (Toric setting):
	\begin{itemize}
			\setlength\itemsep{0em}
		\item $M \cong \mathbb{Z}^3$: A $3$-dimensional lattice with dual lattice $N$, $T := \Hom(M, \mathbb{C}^*) \cong (\mathbb{C}^*)^3$ the torus. We identify $M$ with the lattice of characters of $T$.
		\item $\Delta \subset M_{\mathbb{R}}:= M \otimes \mathbb{R}$: A $3$-dimensional lattice polytope.
		\item $\Sigma_{\Delta}$: The normal fan to the polytope $\Delta$ and more generally $\Sigma_F$ denotes the normal fan to a rational polytope $F \subset M_{\mathbb{R}}$.
		\item For $\Sigma$ a fan in $N_{\mathbb{R}}$ let $\Sigma[i]$ denote the set of $i$-dimensional cones of $\Sigma$ or for $i=1$ also the set of generators of the $1$-dimensional cones.
		\item $\mathbb{P}_{\Delta}$: The toric variety to the polytope $\Delta$, defined via its normal fan, more generally $\mathbb{P}_{\Sigma}$: The toric variety to a fan $\Sigma$.
		\item $\langle v_0,...,v_n \rangle$: The convex span of the lattice points $v_0,...,v_n$.
	\end{itemize}

	Recall that to a ray $\nu_i \in \Sigma_{\Delta}[1]$ is associated a torus invariant divisor $D_i$ and that the canonical divisor of $\mathbb{P}_{\Delta}$ is given by 
	\begin{align} \label{formula_canonical_divisor_toric_variety}
	 K_{\mathbb{P}_{\Delta}} = - \sum\limits_{\nu_i \in \Sigma_{\Delta}[1]} D_i. \end{align}
	In this article we just deal with integral divisors. We are mainly interested in hypersurfaces in toric varieties and for this we take $\Delta$ to be the Newton polytope of a Laurent polynomial $f$:
	\begin{definition} \label{definition_Newton_polytope}
		Let $f$ be a Laurent polynomial with presentation 
		\begin{align} \label{f_Laurent_polynomial}
		f = \sum\limits_{m \in A} a_m z^{m}, \quad a_i \in \mathbb{C}
		\end{align}
		for some finite set $A \subset M$. The convex span of the $m \in A$ with $a_m \neq 0$ is called the Newton polytope of $f$.
	\end{definition}
	$L(\Delta)$ denotes the set of Laurent polynomials $f$ as in (\ref{f_Laurent_polynomial}) with Newton polytope $\Delta$ and
	\[ l(\Delta) := \textrm{ dim}_{\mathbb{C}} \, L(\Delta) = |M \cap \Delta| \]
	the number of lattice points in $\Delta$. Let $l^{*}(\Delta)$ be the number of interior lattice points of $\Delta$ and $Z_{f}$ the zero set $\{f=0\} \subset T$.
	\begin{definition}
		In the above situation $f$ is called $\Delta$-regular or nondegenerate with respect to $\Delta$ if $Z_f$ is smooth and for every face $\Gamma \subset \Delta$ the variety $Z_{f \vert{\Gamma}}$ is smooth as well, where
		\[ f_{\vert{\Gamma}} := \sum\limits_{m \in \Gamma \cap M} a_m z^{m}. \]
	\end{definition}
For $f \in L(\Delta)$ we denote the closure of $Z_f$ in the toric variety $\mathbb{P}_{\Delta}$ by $Z_{\Delta}$. The nondegeneracy means that $Z_{\Delta}$ intersects the toric strata of $\mathbb{P}_{\Delta}$ transversally in a subset of codimension $1$. For $f$ nondegenerate we sometimes also call $Z_{\Delta}$ nondegenerate. Given a complete $3$-dimensional fan $\Sigma$ we write $Z_{\Sigma}$ for the closure of $Z_f$ in $\mathbb{P}_{\Sigma}$ and if $\Sigma = \Sigma_F$ is the normal fan of some rational polytope $F$ we write $Z_F$ for $Z_{\Sigma_F}$. We call $Z_{\Sigma}$ nondegenerate (with respect to $\Sigma$) if it intersects the toric strata of $\mathbb{P}_{\Sigma}$ transversally.

\begin{construction} \label{construction_divisors_polytopes}
		\normalfont
	For $\nu \in N$ let
	\[ \ord_{\Delta}(\nu):= \min\limits_{m \in \Delta \cap M} \langle m, \nu \rangle, \]
	then $\Delta$ has a facet presentation
	\begin{align} \label{facet_presentation_polytope_Delta}
	 \Delta = \{x \in M_{\mathbb{R}}| \, \langle x,\nu_i \rangle \geq \ord_{\Delta}(\nu_i) \}. 
	 \end{align}
	with $ \nu_{1},...,\nu_{s} \in \Sigma_{\Delta}[1]$ the inner facet normals. By (\cite[Prop.5.1]{Bat20}) $Z_{\Delta}$ is linear equivalent to the following torus invariant divisor
	\begin{align} \label{linear_equivalence_hypersurface_ample_sheaf}
	Z_{\Delta} \sim_{lin}  -\sum\limits_{\nu_i \in \Sigma_{\Delta}[1]} \ord_{\Delta}(\nu_i) D_i.
	\end{align}

\end{construction}

	\begin{definition}
	A lattice polytope $\Delta \subset M_{\mathbb{R}}$ is said to be
	\begin{itemize}
		\item canonical, if $l^{*}(\Delta) = 1$,
		\item Fano, if the vertices of $\Delta$ are primitive lattice vectors.
	\end{itemize}
	\end{definition}

	By the classification in (\cite{Kas10}) there are still $674.688$ canonical Fano $3$-topes, but these polytopes could be further divided into classes in terms of their Fine interior:
	\begin{definition} \label{definition_Fine_interior} (\cite[Appendix to §4]{Rei87})\\
		The Fine interior of $\Delta$ is defined as
		\[ F(\Delta) := \{ x \in M_{\mathbb{R}}| \, \langle x, \nu \rangle \geq \ord_{\Delta}(\nu) + 1, \, \nu \in N \setminus \{0\} \} \]
	\end{definition}
	
	\begin{remark}
		\normalfont
		In order to construct the Fine interior $F(\Delta)$ of $\Delta$ we have to move every hyperplane which touches some face of $\Delta$ one into the interior of $\Delta$.	In general it is not enough to move just the hyperplanes defining facets one into the interior (see Figure \ref{figure_examples_polytopes_Fine_interior}). However finitely many hyperplanes will always be enough. In fact define the support $S_{F}(\Delta)$ of $F(\Delta)$ to $\Delta$ as follows:
		\end{remark}
		\begin{definition}
			\normalfont
			The set of lattice points $\nu \in N \setminus \{0\}$ with
			\[ \ord_{F(\Delta)}(\nu) = \ord_{\Delta}(\nu) + 1 \]
			is called the support of $F(\Delta)$ to $\Delta$ and is denoted by $S_{F}(\Delta)$. Then we have
		\end{definition}

		\begin{proposition} \label{proposition_support_finite_number_of_points} (\cite[Prop.1.11]{Bat20})
			\[ S_{F}(\Delta) \subset \langle \nu_i | \, \nu_i \in \Sigma_{\Delta}[1] \rangle \]
			In particular there are only finitely many hyperplanes touching $\Delta$ and after moving into the interior also touching $F(\Delta)$.
		\end{proposition}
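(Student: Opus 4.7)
The plan is to exploit the piecewise linearity of the support function $\nu \mapsto \ord_\Delta(\nu)$ on the normal fan $\Sigma_\Delta$. Concretely, I will show that for any $\nu \in S_F(\Delta)$, if one writes $\nu$ as a nonnegative combination of those facet normals spanning a cone of $\Sigma_\Delta$ that contains $\nu$, the coefficients sum to at most $1$. This places $\nu$ in the convex hull of $\{0\} \cup \{\nu_i : \nu_i \in \Sigma_\Delta[1]\}$, i.e.\ inside the bounded convex polytope on the right-hand side (with the origin tacitly included, since the claim is made only for $\nu \neq 0$). The ``in particular'' part then follows because a bounded set meets $N$ in only finitely many lattice points.

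In detail, I would first fix $\nu \in S_F(\Delta) \setminus \{0\}$ and choose a cone $\sigma \in \Sigma_\Delta$ containing $\nu$, which exists since $\Delta$ is full-dimensional and bounded, so $\Sigma_\Delta$ covers $N_{\mathbb{R}}$. Letting $I$ index the rays of $\sigma$, we may write $\nu = \sum_{i \in I} \lambda_i \nu_i$ with $\lambda_i \geq 0$. The cone--face correspondence then gives: for any lattice point $m$ on the face $F \subset \Delta$ dual to $\sigma$, $\langle m, \nu_i \rangle = r_i$ for every $i \in I$, hence $\ord_\Delta(\nu) = \langle m, \nu \rangle = \sum_{i \in I} \lambda_i r_i$. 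Finally, by the defining property of $S_F(\Delta)$, there exists $x \in F(\Delta)$ with $\langle x, \nu \rangle = \ord_\Delta(\nu) + 1$; applying $\langle x, \nu_i \rangle \geq r_i + 1$ for each $i \in I$ yields
\[ \ord_\Delta(\nu) + 1 = \langle x, \nu \rangle = \sum_{i \in I} \lambda_i \langle x, \nu_i \rangle \geq \sum_{i \in I} \lambda_i (r_i + 1) = \ord_\Delta(\nu) + \sum_{i \in I} \lambda_i, \]
so $\sum_{i \in I} \lambda_i \leq 1$, as required.

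The point that requires care is the choice of decomposition: the key identity $\ord_\Delta(\nu) = \sum_{i \in I} \lambda_i r_i$ is an \emph{equality}, not merely an inequality $\geq$, only when the supporting $\nu_i$ lie in a common cone of $\Sigma_\Delta$, because only then do the corresponding facets share a face of $\Delta$ on which all the linear forms $\langle \cdot , \nu_i \rangle$ attain their minima simultaneously. If $\nu$ sits on a lower-dimensional stratum of $\Sigma_\Delta$, the cone $\sigma$ is not unique, but any valid choice suffices for the argument. Apart from this bookkeeping of matching cones to faces of $\Delta$, the proof is the one-line inequality displayed above, and I expect no further obstacle.
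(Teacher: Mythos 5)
Your proof is correct and is essentially the standard argument behind \cite[Prop.1.11]{Bat20}, which this paper only cites without reproving: linearity of $\ord_{\Delta}$ on the cone of $\Sigma_{\Delta}$ containing $\nu$, combined with the inequalities $\langle x, \nu_i \rangle \geq r_i + 1$ valid on $F(\Delta)$, yields $\sum_{i \in I} \lambda_i \leq 1$ and hence the containment, with finiteness following from boundedness. The only cosmetic point is your parenthetical about ``tacitly including the origin'': this costs nothing, since completeness of $\Sigma_{\Delta}$ (equivalently, boundedness of $\Delta$) already forces $0 \in \langle \nu_i \mid \nu_i \in \Sigma_{\Delta}[1] \rangle$, as otherwise some $m$ would satisfy $\langle m, \nu_i \rangle > 0$ for all $i$ and $\Delta$ would contain a ray.
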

	\begin{remark} \label{remark_Fine_interior_2_dim_at_least_3_dim}
		\normalfont
				For $n=2$ the Fine interior of $\Delta$ always equals the convex span of the interior lattice points of $\Delta$, but for $n \geq 3$ the Fine interior $F(\Delta)$ is in general only a rational polytope, i.e. the vertices have rational coordinates (\cite[Prop.2.9, Rem. 2.10]{Bat17}).
	\end{remark}

\begin{definition} \cite[Def.1.13]{Bat20} \\
	\normalfont
	The polytope
	\[ C(\Delta) := \{ x \in M_{\mathbb{R}} \, | \, \langle x,\nu \rangle \geq \ord_{\Delta}(\nu) \quad  \forall \, \nu \in S_{F}(\Delta) \} \]
	is called the canonical closure of $\Delta$. We call $\Delta$ canonically closed if $C(\Delta) = \Delta$. 
\end{definition}

\begin{remark} \normalfont
	Certainly the canonical closure $C(\Delta)$ contains $\Delta$ and for dim$(\Delta) = 2$ with $F(\Delta) \neq \emptyset$ we even have equality $C(\Delta) = \Delta$ (see \cite[Prop.2.4]{Bat20}). In general we have (\cite[Prop.1.17(b), Cor.1.19]{Bat20}):
	\begin{align}\label{Fine_interior_canonical_closure}
	F(C(\Delta)) = F(\Delta), \quad C(C(\Delta)) = C(\Delta),
	\end{align}
	that is $C(\Delta)$ is canonically closed and still has the same Fine interior as $\Delta$.

\end{remark}
	
	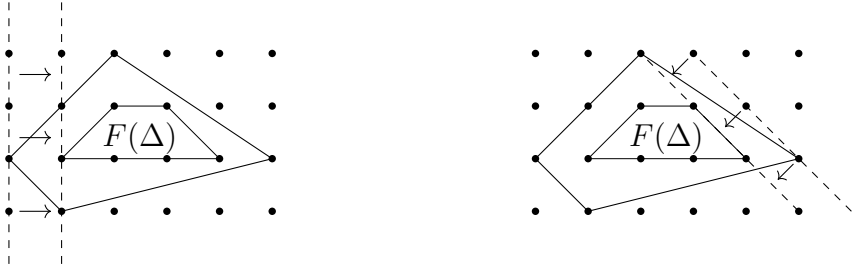
\begin{figure}
	\begin{tikzpicture}[scale=0.7]
	
	\fill (-1,1) circle(2pt);
	\fill (-1,2) circle(2pt);
	\fill (-1,3) circle(2pt);
	\fill (-1,4) circle(2pt);
	\fill (0,1) circle (2pt);
	\fill (0,2) circle (2pt);	
	\fill (0,3) circle (2pt);
	\fill (0,4) circle (2pt);	
	\fill (1,1) circle (2pt);
	\fill (1,2) circle (2pt);	
	\fill (1,3) circle (2pt);
	\fill (1,4) circle (2pt);	
	\fill (2,1) circle (2pt);
	\fill (2,2) circle (2pt);	
	\fill (2,3) circle (2pt);
	\fill (2,4) circle (2pt);	
	\fill (3,1) circle (2pt);
	\fill (3,2) circle (2pt);	
	\fill (3,3) circle (2pt);
	\fill (3,4) circle (2pt);	
	\fill (4,1) circle (2pt);
	\fill (4,2) circle (2pt);	
	\fill (4,3) circle (2pt);
	\fill (4,4) circle (2pt);
	
	\draw (-1,2) -- (1,4);
	\draw (1,4) -- (4,2);
	\draw (-1,2)  -- (0,1);
	\draw (0,1) -- (4,2);
	
	\draw[very thin] (0,2)--(1,3);
	\draw[very thin] (1,3) -- (2,3);
	\draw[very thin] (2,3) -- (3,2);
	\draw[very thin] (3,2) -- (0,2);
	
	\draw[dashed] (-1,0) -- (-1,5);	
	\draw[arrows=->](-0.8,1)--(-0.2,1);
	\draw[arrows=->](-0.8,2.4)--(-0.2,2.4);
	\draw[arrows=->](-0.8,3.6)--(-0.2,3.6);
	\draw[dashed] (0,0) -- (0,5);

		\node [left] at (2.4,2.4) {{\bf $F(\Delta)$ }};
	
	\begin{scope}[xshift = 10cm]
	
		\fill (-1,1) circle(2pt);
	\fill (-1,2) circle(2pt);
	\fill (-1,3) circle(2pt);
	\fill (-1,4) circle(2pt);
	\fill (0,1) circle (2pt);
	\fill (0,2) circle (2pt);	
	\fill (0,3) circle (2pt);
	\fill (0,4) circle (2pt);	
	\fill (1,1) circle (2pt);
	\fill (1,2) circle (2pt);	
	\fill (1,3) circle (2pt);
	\fill (1,4) circle (2pt);	
	\fill (2,1) circle (2pt);
	\fill (2,2) circle (2pt);	
	\fill (2,3) circle (2pt);
	\fill (2,4) circle (2pt);	
	\fill (3,1) circle (2pt);
	\fill (3,2) circle (2pt);	
	\fill (3,3) circle (2pt);
	\fill (3,4) circle (2pt);	
	\fill (4,1) circle (2pt);
	\fill (4,2) circle (2pt);	
	\fill (4,3) circle (2pt);
	\fill (4,4) circle (2pt);
	
	\draw (-1,2) -- (1,4);
	\draw (1,4) -- (4,2);
	\draw (-1,2)  -- (0,1);
	\draw (0,1) -- (4,2);
	
	\draw[very thin] (0,2)--(1,3);
	\draw[very thin] (1,3) -- (2,3);
	\draw[very thin] (2,3) -- (3,2);
	\draw[very thin] (3,2) -- (0,2);
	
	\draw[dashed] (5,1) -- (2,4);	
	\draw[arrows=->](3.9,1.9)--(3.6,1.6);
	\draw[arrows=->](2.9,2.9)--(2.6,2.6);
	\draw[arrows=->](1.9,3.9)--(1.6,3.6);
	\draw[dashed] (4,1) -- (1,4);	
	
	\node [left] at (2.4,2.4) {{\bf $F(\Delta)$ }};

	\end{scope}
	\end{tikzpicture}
	\caption{Illustration of the construction of the Fine interior $F(\Delta)$ from $\Delta$.} \label{figure_examples_polytopes_Fine_interior}
\end{figure}

	\begin{construction} \label{specializing_facts_to_canonical_Fano_3_topes}
		\normalfont
	We restrict in this article to canonical Fano 3-topes $\Delta$ with 
	\[ \textrm{dim } F(\Delta) = 3. \]
	There are just $49$ such polytopes left, listed in (\cite[Appendix A.3]{Sch18}) and they share the following properties:
	\begin{itemize}
		\setlength\itemsep{0em}
		\item If $\Delta$ is canonically closed, then every facet of $\Delta$ has distance $1$ to the origin (=unique interior lattice point) except from one facet, which we call $\Delta_{can}$ with inner facet normal $\nu_{can}$, and which has distance $2$ to the origin, that is
		\[ \Delta = \{ x \in M_{\mathbb{R}}| \, \langle x, \nu_i \rangle \geq -1 \quad \nu_i \in \Sigma_{\Delta}[1] \setminus \nu_{can}, \, \langle x, \nu_{can} \rangle \geq -2 \} \]
		\item There are $46$ polytopes $\Delta$ with $l^{*}(\Delta_{can}) = 2$ and $3$ polytopes with $l^{*}(\Delta_{can}) = 3$.
	\end{itemize}		
	Thus $F(\Delta)$ has the origin as a vertex and exactly one facet, which we call $F(\Delta)_{can}$ opposite to $(0,0,0)$. For $\Delta \neq C(\Delta)$ there might exist more than one facet having distance $2$ to the origin (see for example the facet $\langle a_2, b,d \rangle$ in Table 1 ID:547524). \\
	It follows from the first point that if $\Delta = C(\Delta)$ then 
	\[ K_{Z_{\Delta}} = (Z_{\Delta} + K_{\mathbb{P}_{\Delta}})_{\vert{Z_{\Delta}}} = D_{can \vert{Z_{\Delta}}} \]
	could be identified with a curve on $D_{can}$ with Newton polytope $\Delta_{can}$. This curve is smooth for $f$ sufficiently nondegenerate.

\end{construction}

\subsection{Further classification of the 49 polytopes} \label{subsection_Further_classification_of_49_polytopes}

We brought the $49$ polytopes onto a normal form and showed by observation the following:
\begin{proposition}
	Among the $49$ polytopes there are just $5$ isomorphy types for the Fine interior. For all $49$ examples 
	\[ 2 \cdot F(\Delta)_{can} := \{ 2 \cdot t| t \in F(\Delta)_{can} \} \]
	is inscribed into $\Delta_{can}$ with the same shape as $\Delta_{can}$ (see the pictures below). Thus the $5$ different types of $F(\Delta)$ correspond to $5$ different types of $\Delta_{can}$.
\end{proposition}

For the first $46$ polytopes $\Delta$ with $l^{*}(\Delta_{can}) = 2$ there are two types of $\Delta_{can}$: For $20$ polytopes the facet $\Delta_{can}$ looks like in picture a) and for $26$ polytopes the facet looks like in picture b). We list these polytopes in the Tables \ref{table_spanning_set_polytope_Dcan_3vertices} and \ref{table_spanning_set_polytope_Dcan_4vertices}.

		\begin{center}
	\begin{tikzpicture}[scale = 0.9]
	
	\begin{scope}
	
	\draw[thick] (0,0) -- (-2,0);
	\draw[thick] (-2,0) -- (-1,-3);
	\draw[thick] (-1,-3) -- (0,0);
	
	\draw[thin] (-1,-2) -- (-2/3,-1);
	\draw[thin] (-2/3,-1) -- (-4/3,-1);
	\draw[thin] (-4/3,-1) -- (-1,-2);
	
	\fill (-2,-3) circle (3pt);
	\fill (-2,-2) circle (3pt);
	\fill (-2,-1) circle (3pt);
	\fill (-2,0) circle (3pt);
	\fill (-1,-3) circle (3pt);
	\fill (-1,-2) circle (3pt);	
	\fill (-1,-1) circle (3pt);
	\fill (-1,0) circle (3pt);
	\fill (0,-3) circle (3pt);
	\fill (0,-2) circle (3pt);	
	\fill (0,-1) circle (3pt);
	\fill (0,0) circle (3pt);
	
	
	\node [left] at (1.5,-4) {{\bf a) $\Delta_{can}$ and $2 \cdot F(\Delta)_{can}$}};

	\begin{scope}[xshift = 6cm, yshift = -1cm, rotate=90]

	\draw[thick] (-2,0) -- (0,1);
	\draw[thick] (0,1) -- (1,0);
	\draw[thick] (1,0) -- (0,-1);
	\draw[thick] (0,-1) -- (-2,0);
	
	\draw[thin] (-1,0) -- (-1/3,-1/3);
	\draw[thin] (-1/3,-1/3) -- (0,0);
	\draw[thin] (0,0) -- (-1/3,1/3);
	\draw[thin] (-1/3,1/3) -- (-1,0);		
	
	\fill (-2,-1) circle (3pt);
	\fill (-2,0) circle(3pt);
	\fill (-2,1) circle(3pt);
	\fill (-1,-1) circle (3pt);
	\fill (-1,0) circle (3pt);
	\fill (-1,1) circle (3pt);
	\fill (0,-1) circle (3pt);
	\fill (0,0) circle (3pt);	
	\fill (0,1) circle (3pt);
	\fill (1,-1) circle (3pt);
	\fill (1,0) circle (3pt);
	\fill (1,1) circle (3pt);	
	

	
	rotate = 0;
	\node [left] at (-3,-3) {{\bf b) $\Delta_{can}$ and $2 \cdot F(\Delta)_{can}$}};

	\end{scope}

	\end{scope}
	\end{tikzpicture}
\end{center}

For the remaining $3$ polytopes with $l^{*}(\Delta) = 3$ the facet $\Delta_{can}$ looks like in the pictures c), d) and e).
	\begin{center}
	\begin{tikzpicture}[scale = 0.9]

	\begin{scope}[yshift = 0cm, xshift = 0cm,rotate=180]
	
	\draw[thin] (1,4) -- (1/2,2);
	\draw[thin] (1/2,2) -- (3/2,2);
	\draw[thin] (3/2,2) -- (1,4);
	
	\draw[thick]  (1,5) --  (0,1);
	\draw[thick] (0,1) -- (2,1);
	\draw[thick] (2,1) -- (1,5);
	
	\fill (1,2) circle (3pt) node[right] {$y_1$};
	\fill (1,3) circle (3pt) node[right] {$y_2$};
	\fill (1,4) circle (3pt) node[right] {$y_3$};
	
	\fill (0,1) circle (3pt);
	\fill (0,2) circle (3pt);
	\fill (0,3) circle (3pt);
	\fill (0,4) circle (3pt);
	\fill (0,5) circle (3pt);
	\fill (1,1) circle (3pt);
	\fill (1,2) circle (3pt);
	\fill (1,3) circle (3pt);
	\fill (1,4) circle (3pt);
	\fill (1,5) circle (3pt);
	\fill (2,1) circle (3pt);
	\fill (2,2) circle (3pt);
	\fill (2,3) circle (3pt);
	\fill (2,4) circle (3pt);
	\fill (2,5) circle (3pt);

	\node[left] at (-1,6) {{ \bf c): $\Delta_{can}$, $2 F(\Delta)_{can}$ }};

	\begin{scope}[xshift = -5cm, yshift = 4cm,rotate=-180]
	
	\draw[thick] (-1,3) -- (0,1);
	\draw[thick] (0,1) -- (-1,-1);
	\draw[thick] (-1,-1) -- (-2,1);
	\draw[thick] (-2,1) -- (-1,3);

	\draw[thin] (-1,2) -- (-3/2,1);
	\draw[thin] (-3/2,1) -- (-1,0);
	\draw[thin] (-1,0) -- (-1/2,1);
	\draw[thin] (-1/2,1) -- (-1,2);
	
	\fill (-1,0) circle (3pt) node[right] {$y_3$};
	\fill (-1,1) circle (3pt) node[right] {$y_2$};
	\fill (-1,2) circle (3pt) node[right] {$y_1$};

	\fill (-2,-1) circle (3pt);
	\fill (-2,0) circle (3pt);
	\fill (-2,1) circle (3pt);
	\fill (-2,2) circle (3pt);
	\fill (-2,3) circle (3pt);
	\fill (-1,-1) circle (3pt);
	\fill (-1,0) circle (3pt);
	\fill (-1,1) circle (3pt);
	\fill (-1,2) circle (3pt);
	\fill (-1,3) circle (3pt);
	\fill (0,-1) circle (3pt);
	\fill (0,0) circle (3pt);
	\fill (0,1) circle (3pt);
	\fill (0,2) circle (3pt);
	\fill (0,3) circle (3pt);

	\node [left] at (1,-2) {{\bf d): $\Delta_{can}$, $2 F(\Delta)_{can}$ }};

	\begin{scope}[xshift = 5cm, yshift = 3cm,rotate=180]

\draw[thick] (1,0) -- (2,1);
\draw[thick] (2,1) -- (1,4);
\draw[thick] (1,4) -- (0,1);
\draw[thick] (0,1) -- (1,0);

\draw[thin] (1,3) -- (3/2,3/2);
\draw[thin] (3/2,3/2) -- (1,1);
\draw[thin] (1,1) -- (1/2,3/2);
\draw[thin] (1/2,3/2) -- (1,3);

\fill (0,0) circle (3pt);
\fill (0,1) circle (3pt);
\fill (0,2) circle (3pt);
\fill (0,3) circle (3pt);
\fill (0,4) circle (3pt);
\fill (1,0) circle (3pt);
\fill (1,1) circle (3pt);
\fill (1,2) circle (3pt);
\fill (1,3) circle (3pt);
\fill (1,4) circle (3pt);
\fill (2,0) circle (3pt);
\fill (2,1) circle (3pt);
\fill (2,2) circle (3pt);
\fill (2,3) circle (3pt);
\fill (2,4) circle (3pt);	

rotate=0;
\node [left] at (-1,5) {{\bf e): $\Delta_{can}$, $2F(\Delta)_{can}$}};
	
	\fill (1,1) circle (3pt) node[right] {$y_1$};
	\fill (1,2) circle (3pt) node[right] {$y_2$};
	\fill (1,3) circle (3pt) node[right] {$y_3$};

\end{scope}
	\end{scope}

	\end{scope}
	\end{tikzpicture}
\end{center}	

This result suggests to consider all polytopes $\Delta$ among the $49$ with given $F(\Delta)$ at once. We found by observation:
\begin{proposition}
	In each of the $5$ classes there is exactly one maximal polytope with respect to the inclusion of sets.
\end{proposition}

\begin{remark} \label{remark_maximal_polytope_Delta_and_Delta_FI_same_normal_fan}
	\normalfont
	For $\Delta$ one of the maximal polytopes up to translation $\Delta$ coincides with $6 \cdot F(\Delta)$ in the cases $a)$ an $b)$ and with $4 \cdot  F(\Delta)$ in the cases $c), \, d)$ and $e)$.
\end{remark}
We picture the polytopes in the classes $a)$ and $b)$ in Figures \ref{figure_complete_list_polytopes_a} and \ref{figure_complete_list_polytopes_b}.

The following construction appeared first in (\cite[Cor.4.4]{Bat20}):

\begin{construction} \label{construction_with_the_many_points} 	\normalfont
	Let $\Delta \subset M_{\mathbb{R}}$ be a $3$-dimensional lattice polytope with Fine interior $F(\Delta) \neq \emptyset$. Let 
	\[ \tilde{\Delta}:= C(\Delta) + F(\Delta) \]
	be the Minkowski sum. In this way the normal fan $\Sigma_{\tilde{\Delta}}$ gets the coarsest refinement of $\Sigma_{C(\Delta)}$ and $\Sigma_{F(\Delta)}$, even if $F(\Delta)$ is not full dimensional (\cite[Prop.6.2.13]{CLS11}) and thus we get morphisms
	
	\begin{equation}  \label{diagram_toric_morphisms}
	\begin{tikzcd}
	& \mathbb{P}_{\tilde{\Delta}} \arrow[swap]{dl}{\rho} \arrow{dr}{\theta} \\
	\mathbb{P}_{C(\Delta)} && \mathbb{P}_{F(\Delta)}
	\end{tikzcd}
	\end{equation}
	where $\rho$ is birational and $\theta$ is birational if $\dim \, F(\Delta) = 3$. We prove by observation

\end{construction}

\begin{proposition} \label{proposition_iso_in_codimension_one}
	For $\Delta$ out of the $49$ polytopes $\Sigma_{C(\Delta)}[1] = \Sigma_{\tilde{\Delta}}[1]$, i.e. $\rho: \mathbb{P}_{\tilde{\Delta}} \rightarrow \mathbb{P}_{C(\Delta)}$ is an isomorphism in codimension $1$ and thus $Z_{C(\Delta)} \cong Z_{\tilde{\Delta}}$.
\end{proposition}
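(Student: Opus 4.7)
The plan is to reduce the statement to a finite verification on the 49 polytopes, using the classification of $F(\Delta)$ into $5$ isomorphy types. Since $\Delta$ is canonically closed one has $C(\Delta) = \Delta$, hence $\tilde{\Delta} = \Delta + F(\Delta)$. By \cite[Prop.6.2.13]{CLS11} the normal fan $\Sigma_{\tilde{\Delta}}$ is the coarsest common refinement of $\Sigma_{\Delta}$ and $\Sigma_{F(\Delta)}$. Since $\rho$ is induced by a refinement, the inclusion $\Sigma_{\Delta}[1] \subseteq \Sigma_{\tilde{\Delta}}[1]$ is automatic, and the content of the proposition is the reverse inclusion. A clean sufficient condition is that $\Sigma_{\Delta}$ already refines $\Sigma_{F(\Delta)}$, for then $\Sigma_{\tilde{\Delta}} = \Sigma_{\Delta}$ and there is nothing new to add. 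A necessary condition for this is that every inner facet normal of $F(\Delta)$ belongs to $\Sigma_{\Delta}[1]$; in the $5$ concrete geometries at hand this will also be sufficient, because the $2$-dimensional walls of $\Sigma_{F(\Delta)}$ will turn out to decompose along existing walls of $\Sigma_{\Delta}$ with no ``case (c)'' intersections producing extra rays.

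Next I would exploit the classification. The set $\Sigma_{F(\Delta)}[1]$ admits only $5$ descriptions, which I read off directly from the explicit pictures of $F(\Delta)$ in the types a)--e). For the maximal polytope $\Delta_{\max}$ in each class, Remark \ref{remark_maximal_polytope_Delta_and_Delta_FI_same_normal_fan} gives $\Delta_{\max} = 6\,F(\Delta)$ in classes a), b) and $\Delta_{\max} = 4\,F(\Delta)$ in classes c), d), e), up to translation. Homothety implies $\Sigma_{\Delta_{\max}} = \Sigma_{F(\Delta)}$, so the claim is trivial for $\Delta_{\max}$.

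For every other canonically closed $\Delta$ in the class, the defining half-spaces of $\Delta$ are precisely $\langle x,\nu\rangle \geq \ord_{\Delta}(\nu)$ with $\nu \in S_{F}(\Delta)$. Since each inner facet normal of $F(\Delta)$ automatically lies in $S_{F}(\Delta)$ (the facet being where $\ord_{F(\Delta)}(\nu) = \ord_{\Delta}(\nu)+1$ is achieved), the corresponding hyperplane $\langle x,\nu\rangle = \ord_{\Delta}(\nu)$ at least touches $\Delta$. The remaining task is to verify that this touching locus is genuinely two-dimensional rather than cut down to an edge or a vertex by tighter constraints coming from other elements of $S_{F}(\Delta)$. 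I would perform this check by running through the finite list of canonically closed polytopes organized in Tables \ref{table_spanning_set_polytope_Dcan_3vertices} and \ref{table_spanning_set_polytope_Dcan_4vertices}, and using the normal forms from the previous subsection each case becomes a short computation.

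The main obstacle I anticipate is precisely this redundancy issue: nothing in the abstract framework forces a facet of $F(\Delta)$ to pull back to a facet of $\Delta$, and one can construct polytopes where this fails. So the proof is genuinely an \emph{observation} on the explicit classification rather than a formal consequence of the general theory; what makes it feasible is that the $5$ types of $F(\Delta)$ together with the tables reduce the check to a handful of low-complexity lattice geometries.
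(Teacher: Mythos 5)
Your overall strategy --- reduce the claim to a finite verification over the explicit list, using that $\tilde{\Delta} = \Delta + F(\Delta)$ for $\Delta$ canonically closed and that $\Sigma_{\tilde{\Delta}}$ is the coarsest common refinement of $\Sigma_{\Delta}$ and $\Sigma_{F(\Delta)}$ --- is essentially what the paper does: the proposition is introduced with ``we proof by observation'' and no further argument is given. The automatic inclusion $\Sigma_{\Delta}[1] \subseteq \Sigma_{\tilde{\Delta}}[1]$, the triviality of the maximal case via the homothety $\Delta_{max} = 6\,F(\Delta)$ resp.\ $4\,F(\Delta)$, and the remark that every facet normal of $F(\Delta)$ lies in $S_F(\Delta)$ and hence supports $\Delta$ are all correct and useful scaffolding for that check.

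There is, however, a genuine error in your reduction. You propose to verify the proposition by showing that $\Sigma_{\Delta}$ refines $\Sigma_{F(\Delta)}$, ``for then $\Sigma_{\tilde{\Delta}} = \Sigma_{\Delta}$,'' and you assert that in the five geometries at hand the necessary condition $\Sigma_{F(\Delta)}[1] \subseteq \Sigma_{\Delta}[1]$ is also sufficient for this. That cannot be right: $\Sigma_{\tilde{\Delta}} = \Sigma_{\Delta}$ is equivalent to $\rho$ being an isomorphism, and Proposition \ref{proposition_rho_iso_condition} shows that this holds only for those canonically closed $\Delta$ containing $\{a_1,b_1,(c_1),d_1\}$; for all the others $\rho$ is a small contraction, i.e.\ $\Sigma_{\tilde{\Delta}}$ strictly refines $\Sigma_{\Delta}$ by inserting new two-dimensional walls (see figure \ref{figure_polytope_small_contraction}, where the wall dual to the inserted edge $\langle b',b\rangle$ is new). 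Concretely, for $\Delta = \langle \Delta_{can}, a_1, c_2, d_1\rangle$ the normal cone of $\Delta$ at the vertex $b$ has four generators and strictly contains the simplicial normal cone of $F(\Delta)$ at $b$, so it is contained in no cone of $\Sigma_{F(\Delta)}$ and $\Sigma_{\Delta}$ does not refine $\Sigma_{F(\Delta)}$. What actually needs to be checked is the weaker statement that the common refinement, while it may introduce new walls, introduces no new \emph{rays}: besides $\Sigma_{F(\Delta)}[1]\subseteq\Sigma_{\Delta}[1]$ one must verify that every one-dimensional intersection of a $2$-cone of $\Sigma_{F(\Delta)}$ with a $2$-cone of $\Sigma_{\Delta}$ is already a ray of $\Sigma_{\Delta}$. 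Your case-by-case check should target this condition; as written, carrying out your plan would produce apparent counterexamples to your sufficient criterion in every small-contraction case and leave the proposition unproved there.
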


\begin{proposition} \label{proposition_maximal_polytopes_canonical_models}
	For $\Delta$ one of the $5$ maximal lattice polytopes among the $49$ polytopes we have $\Delta = C(\Delta)$ and
	\[ \mathbb{P}_{F(\Delta)} \cong \mathbb{P}_{\tilde{\Delta}} \cong \mathbb{P}_{C(\Delta)}. \]
\end{proposition}
\begin{proof}
	The reason for the latter is that by Remark \ref{remark_maximal_polytope_Delta_and_Delta_FI_same_normal_fan} the polytopes $\Delta$ and $F(\Delta)$ have the same normal fan.
\end{proof}

\begin{theorem} \label{proposition_to_toric_resolution_diagram_morphism}
	\cite[Thm.4.3]{Bat20} \\
	We have $\Sigma_{\tilde{\Delta}}[1] \subset S_{F}(\Delta)$. 
	In particular we may choose a simplicial fan $\Sigma$ with $\Sigma[1] = S_{F}(\Delta)$ refining $\Sigma_{\tilde{\Delta}}$.
\end{theorem}

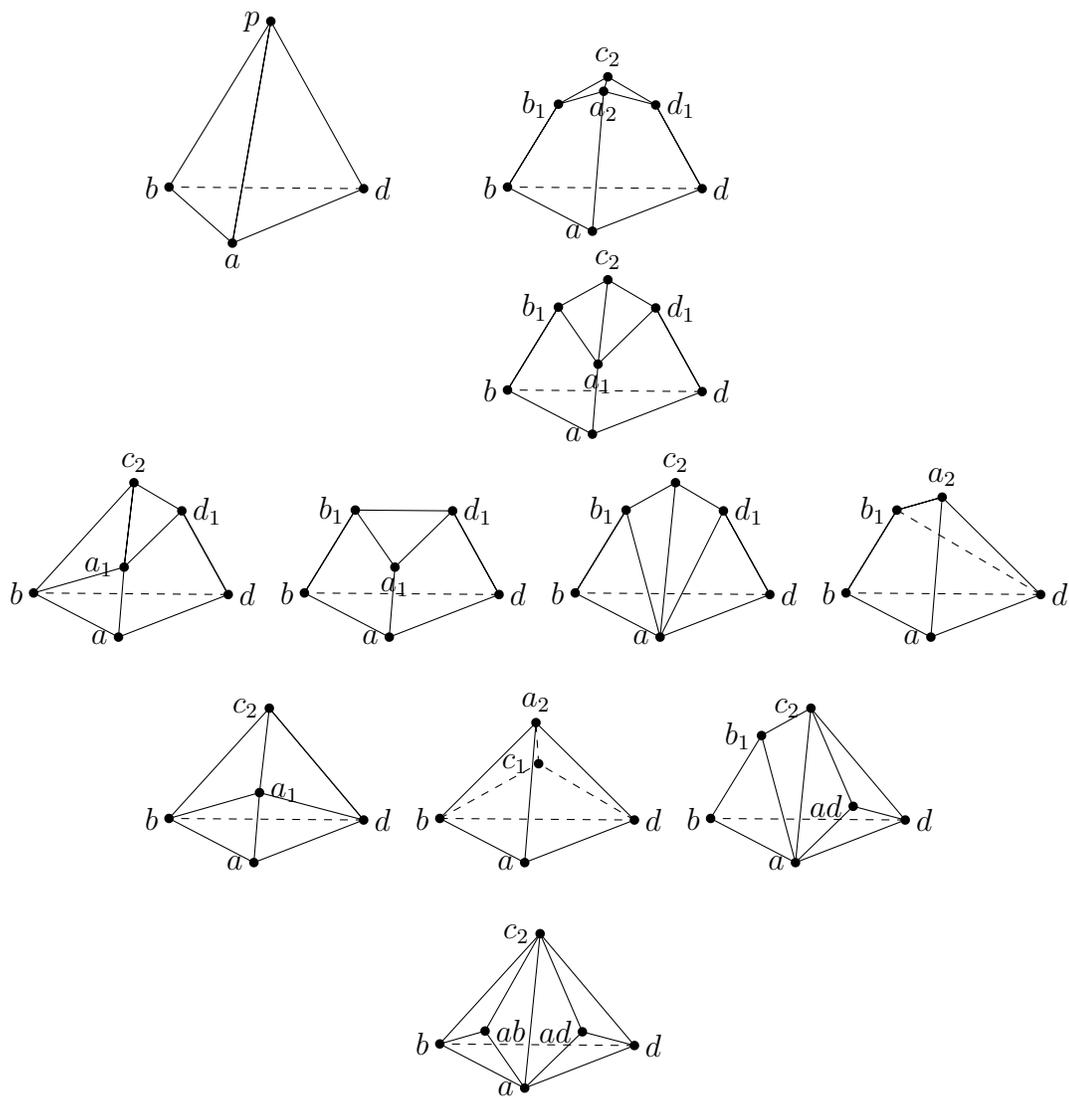
\begin{figure}

	\begin{tikzpicture}[scale = 0.6,rotate=0]

	\begin{scope}[scale = 1,yshift = 0cm, xshift = 5cm, rotate = -16]
	
	\draw (3/2,-1,5/2) -- (3,0,0);
	\draw (0,0,3) -- (0,3,0) -- (3/2,-1,5/2) -- (0,3,0) -- (3,0,0);
	\draw (0,0,3) -- (3/2,-1,5/2);
	
	\draw[dashed] (3,0,0) -- (0,0,3);
	

	\fill (3/2,-1,5/2) circle (3pt) node[below] {$a$};
	\fill (3,0,0) circle (3pt) node[right] {$d$};
	\fill (0,3,0) circle (3pt) node[left] {$p$};
	\fill (0,0,3) circle (3pt) node[left] {$b$};
	
	
	

	\end{scope}

	\begin{scope}[xshift = 12.5cm, yshift = 0cm, rotate = -16]
	

	\draw (3/2,-1,3/2) -- (0,0,3);
	\draw (3,0,0) -- (3/2,-1,3/2) -- (1/2,5/3,1/2) -- (0,1.5,1.5) -- (0,0,3) -- (0,1.5,1.5) -- (1/2,2,1/2) -- (1.5,1.5,0) -- (3,0,0) -- (1.5,1.5,0) -- (1/2,5/3,1/2) -- (1/2,2,1/2);
	
	\draw[dashed] (3,0,0) -- (0,0,3);
	
	
	\fill (3,0,0) circle (3pt) node[right] {$d$};
	\fill (3/2,-1,3/2) circle (3pt) node[left] {$a$};
	\fill (0,0,3) circle (3pt) node[left] {$b$};
	\fill (1/2,5/3,1/2) circle (3pt) node[below] {$a_2$};
	\fill (1.5,1.5,0) circle (3pt) node[right] {$d_1$};
	\fill (0,1.5,1.5) circle (3pt) node[left] {$b_1$};
	\fill (1/2,2,1/2) circle (3pt) node[above] {$c_2$};	
	

	\end{scope}

	\begin{scope}[xshift = 12.5cm, yshift = -4.5cm, rotate = -16]
	

	\draw (3/2,-1,3/2) -- (0,0,3);
	\draw (3,0,0) -- (3/2,-1,3/2) -- (1,1/3,1) -- (0,1.5,1.5) -- (0,0,3) -- (0,1.5,1.5) -- (1/2,2,1/2) -- (1.5,1.5,0) -- (3,0,0) -- (1.5,1.5,0) -- (1,1/3,1) -- (1/2,2,1/2);
	
	\draw[dashed] (3,0,0) -- (0,0,3);
	
	
	\fill (3,0,0) circle (3pt) node[right] {$d$};
	\fill (3/2,-1,3/2) circle (3pt) node[left] {$a$};
	\fill (0,0,3) circle (3pt) node[left] {$b$};
	\fill (1,1/3,1) circle (3pt) node[below] {$a_1$};
	\fill (1.5,1.5,0) circle (3pt) node[right] {$d_1$};
	\fill (0,1.5,1.5) circle (3pt) node[left] {$b_1$};
	\fill (1/2,2,1/2) circle (3pt) node[above] {$c_2$};	
	
	
	\end{scope}

	\begin{scope}[xshift = 2cm, yshift = -9cm, rotate = -16]

	\draw (3/2,-1,3/2) -- (0,0,3);
	\draw (3,0,0) -- (3/2,-1,3/2) -- (1,1/3,1) -- (1.5,1.5,0) -- (3,0,0) -- (1.5,1.5,0)  -- (1/2,2,1/2) -- (1,1/3,1) -- (1/2,2,1/2) -- (0,0,3) -- (1,1/3,1);
	
	\draw[dashed] (3,0,0) -- (0,0,3);
	

	\fill (3,0,0) circle (3pt) node[right] {$d$};
	\fill (3/2,-1,3/2) circle (3pt) node[left] {$a$};
	\fill (0,0,3) circle (3pt) node[left] {$b$};
	\fill (1,1/3,1) circle (3pt) node[left] {$a_1$};
	\fill (1.5,1.5,0) circle (3pt) node[right] {$d_1$};
	\fill (1/2,2,1/2) circle (3pt) node[above] {$c_2$};	
	
	
	\end{scope}

	\begin{scope}[xshift = 8cm, yshift = -9cm, rotate = -16]

	\draw (3/2,-1,3/2) -- (0,0,3);
	\draw (3,0,0) -- (3/2,-1,3/2) -- (1,1/3,1) -- (0,1.5,1.5) -- (0,0,3) -- (0,1.5,1.5) -- (1.5,1.5,0) -- (3,0,0) -- (1.5,1.5,0) -- (1,1/3,1);
	
	\draw[dashed] (3,0,0) -- (0,0,3);
	

	\fill (3,0,0) circle (3pt) node[right] {$d$};
	\fill (3/2,-1,3/2) circle (3pt) node[left] {$a$};
	\fill (0,0,3) circle (3pt) node[left] {$b$};
	\fill (1,1/3,1) circle (3pt) node[below] {$a_1$};
	\fill (1.5,1.5,0) circle (3pt) node[right] {$d_1$};
	\fill (0,1.5,1.5) circle (3pt) node[left] {$b_1$};
	
	
	\end{scope}

	\begin{scope}[xshift = 14cm, yshift = -9cm, rotate = -16]

	\draw (3/2,-1,3/2) -- (0,0,3);
	\draw (3,0,0) -- (3/2,-1,3/2) -- (0,1.5,1.5) -- (0,0,3) -- (0,1.5,1.5) -- (1/2,2,1/2) -- (1.5,1.5,0) -- (3,0,0) -- (1.5,1.5,0) -- (3/2,-1,3/2) -- (1/2,2,1/2);
	
	\draw[dashed] (3,0,0) -- (0,0,3);
	

	\fill (3,0,0) circle (3pt) node[right] {$d$};
	\fill (3/2,-1,3/2) circle (3pt) node[left] {$a$};
	\fill (0,0,3) circle (3pt) node[left] {$b$};
	\fill (1.5,1.5,0) circle (3pt) node[right] {$d_1$};
	\fill (0,1.5,1.5) circle (3pt) node[left] {$b_1$};
	\fill (1/2,2,1/2) circle (3pt) node[above] {$c_2$};	
	
	
	\end{scope}

	\begin{scope}[xshift = 20cm, yshift = -9cm, rotate = -16]

	\draw (3/2,-1,3/2) -- (0,0,3);
	\draw (3,0,0) -- (3/2,-1,3/2) -- (1/2,5/3,1/2) -- (0,3/2,3/2) -- (0,0,3) -- (0,3/2,3/2)-- (1/2,5/3,1/2) -- (3,0,0);
	
	\draw[dashed] (3,0,0) -- (0,3/2,3/2);
	\draw[dashed] (3,0,0) -- (0,0,3);
	

	\fill (3,0,0) circle (3pt) node[right] {$d$};
	\fill (3/2,-1,3/2) circle (3pt) node[left] {$a$};
	\fill (0,0,3) circle (3pt) node[left] {$b$};
	\fill (1/2,5/3,1/2) circle (3pt) node[above] {$a_2$};
	\fill (0,1.5,1.5) circle (3pt) node[left] {$b_1$};
	
	\end{scope}

	\begin{scope}[xshift = 5cm, yshift = -14cm, rotate = -16]

	\draw (3,0,0) -- (1/2,2,1/2) -- (0,0,3) -- (3/2,-1,3/2) -- (1,1/3,1) -- (3,0,0);
	\draw (0,0,3) -- (1,1/3,1) --(1/2,2,1/2) -- (3,0,0) -- (3/2,-1,3/2);
	\draw[dashed] (3,0,0) -- (0,0,3);
	

	\fill (3,0,0) circle (3pt) node[right] {$d$};
	\fill (3/2,-1,3/2) circle (3pt) node[left] {$a$};
	\fill (0,0,3) circle (3pt) node[left] {$b$};
	\fill (1,1/3,1) circle (3pt) node[right] {$a_1$};
	\fill (1/2,2,1/2) circle (3pt) node[left] {$c_2$};
	
	
	\end{scope}

	\begin{scope}[xshift = 11cm, yshift = -14cm, rotate = -16]
	
	\draw[dashed] (0,0,3) -- (3,0,0);
	\draw[dashed] (0,0,3) -- (1,1,1);
	\draw[dashed] (3,0,0) -- (1,1,1);
	\draw[dashed] (1,1,1) -- (1/2,5/3,1/2);
	\draw (3/2,-1,3/2) -- (0,0,3);
	\draw (3,0,0) -- (1/2,5/3,1/2);
	\draw (1/2,5/3,1/2) -- (3/2,-1,3/2);
	\draw (3/2,-1,3/2) -- (3,0,0);
	\draw (0,0,3) -- (1/2,5/3,1/2);
	

	\fill (3,0,0) circle (3pt) node[right] {$d$};
	\fill (3/2,-1,3/2) circle (3pt) node[left] {$a$};
	\fill (0,0,3) circle (3pt) node[left] {$b$};
	\fill (1/2,5/3,1/2) circle (3pt) node[above] {$a_2$};
	\fill (1,1,1) circle (3pt) node[left] {$c_1$};
	
	
	\end{scope}

	\begin{scope}[xshift = 17cm, yshift = -14cm, rotate = -16]
	
	\draw (3/2,-1,3/2) -- (0,0,3);
	\draw (3/2,-1,3/2) -- (1/2,2,1/2);
	\draw (3,0,0) -- (1/2,2,1/2);
	\draw (3,0,0) -- (2,1/6,1/2);
	\draw (3/2,-1,3/2) -- (2,1/6,1/2);
	\draw (1/2,2,1/2) -- (2,1/6,1/2);
	\draw (3/2,-1,3/2) -- (0,3/2,3/2);
	\draw (0,3/2,3/2) -- (1/2,2,1/2);
	\draw (3/2,-1,3/2) -- (3,0,0);
	\draw[dashed] (3,0,0) -- (0,0,3);
	\draw (0,3/2,3/2) -- (0,0,3);

	\fill (3,0,0) circle (3pt) node[right] {$d$};
	\fill (3/2,-1,3/2) circle (3pt) node[left] {$a$};
	\fill (0,0,3) circle (3pt) node[left] {$b$};
	\fill (0,3/2,3/2) circle (3pt) node[left] {$b_1$};
	\fill (1/2,2,1/2) circle (3pt) node[left] {$c_2$};
	\fill (2,1/6,1/2) circle (3pt) node[left] {$ad$};
	
	\end{scope}

	\begin{scope}[xshift = 11cm, yshift = -19cm, rotate = -16]
	
	\draw (3/2,-1,3/2) -- (0,0,3);
	\draw (3/2,-1,3/2) -- (1/2,2,1/2);
	\draw (3,0,0) -- (1/2,2,1/2);
	\draw (3,0,0) -- (2,1/6,1/2);
	\draw (1/2,2,1/2) -- (2,1/6,1/2);
	\draw (3/2,-1,3/2) -- (2,1/6,1/2);
	\draw (3/2,-1,3/2) -- (1/2,1/6,2);
	\draw (1/2,1/6,2) -- (1/2,2,1/2);
	\draw (3/2,-1,3/2) -- (3,0,0);
	\draw[dashed] (3,0,0) -- (0,0,3);
	\draw (1/2,1/6,2) -- (0,0,3);
	\draw (0,0,3) -- (1/2,2,1/2);

	\fill (3,0,0) circle (3pt) node[right] {$d$};
	\fill (3/2,-1,3/2) circle (3pt) node[left] {$a$};
	\fill (0,0,3) circle (3pt) node[left] {$b$};
	\fill (1/2,1/6,2) circle (3pt) node[right] {$ab$};
	\fill (1/2,2,1/2) circle (3pt) node[left] {$c_2$};
	\fill (2,1/6,1/2) circle (3pt) node[left] {$ad$};
	
	\end{scope}

	\end{tikzpicture}
	\caption{The 11 canonically closed polytopes out of 20 polytopes in the first class a). The polytopes are ordered in rows descendingly by their number of lattice points (The maximal polytope is additionaly put in the first row on the left).} \label{figure_complete_list_polytopes_a}
\end{figure}

\input{Pictures_2}

	\section{Construction of some minimal/canonical surfaces of general type in toric 3-folds}
\label{chapter_construction_minimal_canonical_models}
	\subsection{A toric framwork for constructing minimal and canonical models of hypersurfaces} \label{section_Fine_interior}
	 In this section we summarize some results from (\cite{Bat20}). The results of this section stay true in any dimension. In order to speak about minimal (canonical) models, we should give here a Definition of terminal (canonical) singularities of algebraic varieties. But we will just need this notions for toric varieties (in fact toric $3$-folds) and algebraic surfaces. In these two cases the situation could be simplified:
	\begin{definition} \label{definition_canonical_terminal_singularities}
		\cite[(1.11)]{Rei83} \\
		Consider a fan $\Sigma$ in $N_{\mathbb{R}}$ and a cone $\sigma$ of $\Sigma$. Then $\sigma$ is called canonical (of index $j \in \mathbb{N}$), if there exists a primitive vector $m \in M$ such that
		\[ \langle m, \nu_i \rangle = j \quad \textrm{for } \nu_i \in \sigma[1] \]
		and 
		\begin{align} \label{inequality_canonical_toric_variety}
		\langle m, n \rangle \geq j \quad  \textrm{for } n \in \sigma \cap N, \, n \notin \{0\} \cup \sigma[1]
		\end{align}
		$\sigma$ is called terminal if we have strict inequality in (\ref{inequality_canonical_toric_variety}). The fan $\Sigma$ is called canonical (terminal) if all its cones are canonical (terminal).
	\end{definition}

	According to \cite[(1.12)]{Rei83} we have the following result:
	\begin{theorem}
		The toric variety $\mathbb{P}_{\Sigma}$ has at most canonical (terminal) singularities if and only if the fan $\Sigma$ is canonical (terminal).
	\end{theorem}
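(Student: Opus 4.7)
The plan is to reduce to the local setting and then use the explicit discrepancy formula for toric divisorial valuations. Since canonicity and terminality are local properties and the condition in Definition 3.2 is imposed on each cone of $\Sigma$ separately, it suffices to prove the equivalence for a single full-dimensional cone $\sigma$ and the affine toric variety $U_\sigma$.

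First I would interpret the vector $m$ in Definition 3.2 via the Cartier data of $K_{U_\sigma}$. By Construction 2.7 applied to $K_{U_\sigma} = -\sum_{\nu_i \in \sigma[1]} D_i$, the canonical divisor is $\mathbb{Q}$-Cartier of index $j$ precisely when there exists a rational vector $m_\sigma \in M_{\mathbb{Q}}$ with $\langle m_\sigma, \nu_i \rangle = 1$ for all $\nu_i \in \sigma[1]$, where $j$ is the smallest positive integer such that $m := j m_\sigma$ lies in $M$ (and is then automatically primitive). Thus the existence of the primitive $m$ in Definition 3.2 is equivalent to the $\mathbb{Q}$-Cartier property of $K_{U_\sigma}$, a prerequisite for having canonical or terminal singularities at all.

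Next I would choose a toric log resolution $\pi: Y \to U_\sigma$ coming from a smooth refinement $\Sigma'$ of $\sigma$ in $N$; such refinements exist by the classical equivariant resolution of toric singularities. The exceptional divisors are the torus invariant $E_n$ for primitive generators $n \in \Sigma'[1] \setminus \sigma[1]$, each lying in $\sigma \cap N \setminus (\{0\} \cup \sigma[1])$. Since $\pi^* K_{U_\sigma}$ is represented on $\sigma$ by the linear function $\langle m_\sigma, \cdot \rangle$, a direct comparison with $K_Y = -\sum_{\rho \in \Sigma'[1]} D_\rho$ yields the discrepancy
\[ a(E_n) \;=\; \langle m_\sigma, n \rangle - 1 \;=\; \frac{\langle m, n \rangle - j}{j}. \]
Hence $a(E_n) \geq 0$ (resp.\ $>0$) if and only if $\langle m, n \rangle \geq j$ (resp.\ $> j$), matching Definition 3.2 exactly. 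Note also that for $n = \nu_i$ this gives $a = 0$, which is consistent with the $\nu_i$ corresponding to non-exceptional divisors.

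Both directions of the theorem now follow easily. For the backward implication, if Definition 3.2 holds then every exceptional divisor of any smooth toric refinement satisfies $a(E_n) \geq 0$ (resp.\ $>0$); since canonicity/terminality may be tested on a single log resolution, $U_\sigma$ has at most canonical (terminal) singularities. For the forward implication, given any primitive $n \in \sigma \cap N \setminus (\{0\} \cup \sigma[1])$, I would first subdivide $\sigma$ by the ray through $n$ and then complete to a smooth refinement, and use the assumed canonical/terminal property of $U_\sigma$ to conclude $a(E_n) \geq 0$ (resp.\ $> 0$), yielding the desired inequality. The one nontrivial input is the existence of smooth toric refinements extending any prescribed star subdivision, which is classical; once this is granted the proof is essentially the discrepancy computation above.
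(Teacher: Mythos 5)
Your proof is correct and is essentially the standard argument from the reference the paper cites for this result ([Rei83, (1.12)]); the paper itself states the theorem with only that citation and gives no proof. Your parenthetical claim that $m = j m_\sigma$ is automatically primitive does hold, but only because $m_\sigma$ takes the integer value $1$ on the lattice vectors $\nu_i$ spanning $N_{\mathbb{R}}$ (any relation $jm_\sigma = km'$ with $m'\in M$ forces $\langle m',\nu_i\rangle = j/k\in\mathbb{Z}$, hence $k\mid j$, contradicting minimality of $j$), so a one-line justification would be worth adding.
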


	For normal algebraic surfaces we have
	\begin{proposition} (\cite[Thm.4.5]{KM98}) \\
		A normal algebraic surface $Y$ has canonical (terminal) singularities if and only if it has at most rational double points (is smooth). We choose the abbreviation R.D.P. for rational double points.
	\end{proposition}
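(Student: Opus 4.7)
The plan is to work locally at a singular point $y \in Y$ and exploit the fact that normal surfaces admit a unique minimal resolution $\pi \colon \tilde{Y} \to Y$. Write the discrepancy formula
\[ K_{\tilde Y} = \pi^{*} K_{Y} + \sum_{i} a_{i} E_{i} \]
where the $E_{i}$ are the irreducible exceptional components. By definition $Y$ is canonical iff $a_{i} \geq 0$ for every $i$ on every resolution, and terminal iff $a_{i} > 0$; a standard discrepancy calculation shows that it suffices to test this on the minimal resolution.

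The first key input I would use is the general fact (Mumford) that the intersection matrix $(E_{i} \cdot E_{j})$ on a resolution of a normal surface singularity is negative definite, and that the exceptional curves on the minimal resolution are smooth rational curves with $E_{i}^{2} \leq -2$. The second ingredient, arising from minimality, is that $K_{\tilde Y} \cdot E_{i} \geq 0$ for each $i$: by the adjunction formula $2 p_{a}(E_{i}) - 2 = E_{i}^{2} + K_{\tilde Y} \cdot E_{i}$, and with $E_{i} \cong \mathbb{P}^{1}$ and $E_{i}^{2} \leq -2$ this gives $K_{\tilde Y}\cdot E_{i} \geq 0$. Intersecting the discrepancy formula with $E_{j}$ and using $\pi^{*}K_{Y}\cdot E_{j} = 0$ yields
\[ \sum_{i} a_{i} (E_{i} \cdot E_{j}) = K_{\tilde Y} \cdot E_{j} \geq 0 \qquad \text{for all } j. \]
A standard lemma on negative definite matrices (if $M$ is negative definite and $Ma \geq 0$ componentwise then $a \leq 0$) then forces $a_{i} \leq 0$ for all $i$.

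Combining the two inequalities finishes both directions. For the terminal case, requiring $a_{i} > 0$ together with $a_{i} \leq 0$ is impossible unless there are no exceptional divisors at all, i.e.\ $\pi$ is an isomorphism and $Y$ is smooth. For the canonical case, $a_{i} \geq 0$ together with $a_{i} \leq 0$ forces $a_{i} = 0$ for every $i$, so $\pi$ is crepant: $K_{\tilde Y} = \pi^{*} K_{Y}$. Then $K_{\tilde Y} \cdot E_{i} = 0$ for every $i$, and adjunction gives $E_{i}^{2} = -2$ and $E_{i} \cong \mathbb{P}^{1}$, so the exceptional configuration is a tree of $(-2)$-curves, which is precisely the definition of a rational double point (Du Val / ADE singularity). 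Conversely, if $Y$ has only rational double points, the minimal resolution has all $a_{i} = 0$ and $Y$ is canonical.

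The main obstacle I anticipate is the negative-definite-matrix lemma and the Mumford rationality/negative-definiteness facts, which are truly the content of the statement; once these are invoked, the characterizations of canonical and terminal drop out almost mechanically. Identifying ``crepant minimal resolution with a tree of $(-2)$-curves'' with ``rational double point'' is classical (Du Val's classification), but it is the only step where one leaves the pure discrepancy calculus and appeals to an additional classification result.
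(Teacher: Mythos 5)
The paper does not prove this proposition; it is quoted verbatim from \cite[Thm.~4.5]{KM98}, and your argument reproduces the standard proof from that source: pass to the minimal resolution, use negative definiteness of the exceptional intersection matrix together with $K_{\tilde Y}\cdot E_j\ge 0$ to force all discrepancies to be $\le 0$, and then read off the two cases. The overall structure is correct.

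One step, however, is justified by a false claim. You assert that the exceptional curves of the minimal resolution of a normal surface singularity are smooth rational curves with $E_i^2\le -2$ and deduce $K_{\tilde Y}\cdot E_i\ge 0$ from adjunction. That premise fails for non-rational singularities (e.g.\ the cone over an elliptic curve, whose minimal resolution has an elliptic exceptional curve), so as written your derivation of the key inequality does not go through. The inequality itself is true, but the logic runs in the other direction: minimality means there is no $(-1)$-curve in the exceptional fibre, and if some exceptional $E_i$ had $K_{\tilde Y}\cdot E_i<0$, then adjunction $2p_a(E_i)-2=E_i^2+K_{\tilde Y}\cdot E_i$ together with $E_i^2<0$ (negative definiteness) would force $p_a(E_i)=0$ and $E_i^2=K_{\tilde Y}\cdot E_i=-1$, i.e.\ a $(-1)$-curve, a contradiction. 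With that repair the rest is fine. Two smaller points worth making explicit: the negative-definite-matrix lemma you invoke requires the off-diagonal entries $E_i\cdot E_j$ ($i\ne j$) to be nonnegative, which holds here since distinct irreducible curves meet nonnegatively; and in the converse direction one should note that a rational double point is Gorenstein, so $K_Y$ is Cartier and the discrepancy formula is actually available.
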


	\begin{definition}
	A (partial) resolution of singularities $\pi: X \rightarrow Y$ of normal varieties is called \textit{crepant} if
	 \[ K_X = \pi^{*}(K_Y). \]
	Given a normal projective surface $Y$, we write $\kappa(Y)$ for the Kodaira dimension of $Y$, which by Definition is the Kodaira dimension of a resolution of singularities of $Y$. \\
	Given a normal algebraic surface $Y$ with $\kappa(Y) \geq 0$ a smooth projective surface $Y_{min}$ birational to $Y$ and with $K_{Y_{min}}$ nef is called a minimal model of $Y$. \\
	If $\kappa(Y) = 2$ a projective surface $Y_{can}$ birational to $Y$ with at most R.D.P. and with $K_{Y_{can}}$ ample is called a canonical model of $Y$.	
	\end{definition}

	\begin{remark}
	\normalfont
	If $Y$ is a surface with $\kappa(Y) \geq 0$ there exists up to isomorphism a unique minimal model of $Y$ and if $\kappa(Y) = 2$ there exists up to isomorphism a unique canonical model of $Y$. In the latter case there is a birational morphism $\phi: Y_{min} \rightarrow Y_{can}$ from the minimal model $Y_{min}$ to the canonical model $Y_{can}$, given by contracting every curve $C$ with $K_Y.C = 0$ (\cite[Ch.7, Cor.2.3]{BHPV04}). By a minimal or canonical model of $Z_f$ we mean a minimal or canonical model of $Z_{\Delta}$.
	\end{remark}

	\begin{theorem} \label{theorem_Kodaira_dimension_dimension_Fine_interior} (\cite[Thm.6.2]{Bat20}) \\
		Given a $3$-dimensional lattice polytope $\Delta \subset M_{\mathbb{R}}$ with $k:= \dim(F(\Delta)) \geq 0$, the Kodaira dimension of $Z_{\tilde{\Delta}}$ equals
		\[ \kappa(Z_{\tilde{\Delta}}) = \min(k,2). \]
	\end{theorem}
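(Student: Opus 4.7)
The plan is to reduce the theorem to a numerical statement using the basepoint-freeness established in Construction \ref{construction_Iitaka_fibration}. There, $m(Z_{\tilde{\Delta}} + K_{\mathbb{P}_{\tilde{\Delta}}})$ is a basepoint-free Cartier divisor on $\mathbb{P}_{\tilde{\Delta}}$ with associated polytope $mF(\Delta)$, and therefore equals $\theta^{*}(m D_{F(\Delta)})$ for the toric morphism $\theta$ from diagram (\ref{diagram_toric_morphisms}) and the ample divisor $D_{F(\Delta)}$ corresponding to $F(\Delta)$. Setting $\psi := \theta|_{Z_{\tilde{\Delta}}}$ and applying adjunction yields
\[
m K_{Z_{\tilde{\Delta}}} \;=\; \psi^{*}(m D_{F(\Delta)}),
\]
so the pluricanonical class on $Z_{\tilde{\Delta}}$ is a pullback of an ample class via $\psi$. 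Setting $d := \dim \psi(Z_{\tilde{\Delta}})$, the projection-formula identity
\[
h^{0}(Z_{\tilde{\Delta}},\, m K_{Z_{\tilde{\Delta}}}) \;=\; h^{0}\bigl(\psi(Z_{\tilde{\Delta}}),\, m D_{F(\Delta)}|_{\psi(Z_{\tilde{\Delta}})} \otimes \psi_{*} \mathcal{O}_{Z_{\tilde{\Delta}}}\bigr)
\]
combined with a Hilbert-polynomial computation (ample class on a $d$-dimensional projective scheme paired with a coherent sheaf) shows that $h^{0}(m K_{Z_{\tilde{\Delta}}})$ grows of exact polynomial order $d$ in $m$. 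Hence $\kappa(Z_{\tilde{\Delta}}) = d$, and the theorem reduces to proving $d = \min(k, 2)$.

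The upper bound $d \leq \min(k, 2)$ is immediate from $d \leq \dim Z_{\tilde{\Delta}} = 2$ and $d \leq \dim \mathbb{P}_{F(\Delta)} = k$. For the lower bound, the cases $k = 0$ (trivial, since $\mathbb{P}_{F(\Delta)}$ is a point) and $k = 3$ are quick: for $k = 3$, $\theta$ is a toric birational morphism between three-folds whose exceptional locus consists of torus-invariant divisors, and since $Z_{\tilde{\Delta}}$ is not torus-invariant it is not contracted by $\theta$, so $\psi(Z_{\tilde{\Delta}})$ is a two-dimensional subvariety of $\mathbb{P}_{F(\Delta)}$. The decisive cases are $k = 1$ and $k = 2$, handled by contradiction. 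Assume $\psi$ is not dominant and choose an effective Cartier divisor $H$ on $\mathbb{P}_{F(\Delta)}$ whose support contains $\psi(Z_{\tilde{\Delta}})$; then $Z_{\tilde{\Delta}} \leq \theta^{*} H$ as effective Cartier divisors. Since $\dim \mathbb{P}_{F(\Delta)} = k \leq 2$ we have $H^{3} = 0$ on $\mathbb{P}_{F(\Delta)}$, and hence $(\theta^{*} H)^{3} = 0$ on $\mathbb{P}_{\tilde{\Delta}}$. Applying the algebraic identity
\[
(\theta^{*} H)^{3} - Z_{\tilde{\Delta}}^{3} \;=\; \bigl(\theta^{*} H - Z_{\tilde{\Delta}}\bigr) \cdot \bigl((\theta^{*} H)^{2} + \theta^{*} H \cdot Z_{\tilde{\Delta}} + Z_{\tilde{\Delta}}^{2}\bigr),
\]
together with the effectivity of the first factor and the pseudo-effectivity of the $1$-cycle class appearing in the second factor (each of its summands is a product of nef divisors), one obtains $Z_{\tilde{\Delta}}^{3} \leq 0$. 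This contradicts the strict positivity $Z_{\tilde{\Delta}}^{3} > 0$, which follows from $Z_{\tilde{\Delta}}$ being nef and big on $\mathbb{P}_{\tilde{\Delta}}$ (Construction \ref{construction_divisors_polytopes}).

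The main obstacle lies in the numerical positivity step: one must justify that $(\theta^{*} H)^{2}$ and $Z_{\tilde{\Delta}}^{2}$ are pseudo-effective $1$-cycles and that pairing the resulting pseudo-effective class with the effective divisor $\theta^{*} H - Z_{\tilde{\Delta}}$ yields a nonnegative number. These are standard consequences of the fact that squares of nef Cartier divisors on projective three-folds are pseudo-effective and that nef-times-pseudo-effective intersections are nonnegative, but they must be invoked explicitly. Once this positivity is in hand, the combinatorial input from Construction \ref{construction_Iitaka_fibration} combined with the bigness of $Z_{\tilde{\Delta}}$ forces $d = \min(k, 2)$, yielding the Kodaira dimension formula.
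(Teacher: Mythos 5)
The paper offers no proof of this theorem; it is quoted verbatim from \cite[Thm.6.2]{Bat20}. So your argument can only be judged on its own merits, and its architecture is sound and self-contained: Construction \ref{construction_Iitaka_fibration} gives $m(Z_{\tilde{\Delta}}+K_{\mathbb{P}_{\tilde{\Delta}}})=\theta^{*}(mD_{F(\Delta)})$, semiampleness identifies $\kappa(Z_{\tilde{\Delta}})$ with $d=\dim\psi(Z_{\tilde{\Delta}})$ via the projection formula, the bound $d\le\min(k,2)$ is trivial, the case $k=3$ follows from birationality of $\theta$ together with $Z_{\tilde{\Delta}}\not\subset\mathrm{Exc}(\theta)$, and for $k\le 2$ the issue is surjectivity of $\psi$, which your intersection-theoretic contradiction does establish.

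The one step whose justification is actually wrong as written is the positivity of $(\theta^{*}H-Z_{\tilde{\Delta}})\cdot\gamma$. You appeal to ``pairing a pseudo-effective $1$-cycle with an effective divisor yields a nonnegative number,'' which is false: blow up a point of a smooth $3$-fold, let $E\cong\mathbb{P}^{2}$ be the exceptional divisor and $\ell\subset E$ a line; then $\ell$ is effective (hence pseudo-effective) and $E$ is effective, yet $\ell\cdot E=-1$. The inequality you need is nevertheless true, but the correct principle is Kleiman positivity applied after restricting the \emph{nef} factors to the \emph{effective} one: writing $E':=\theta^{*}H-Z_{\tilde{\Delta}}=\sum a_iE_i'$ with $a_i>0$, each summand of $E'\cdot\gamma$ has the form $E'\cdot A\cdot B=\sum_i a_i\,(A\vert_{E_i'}\cdot B\vert_{E_i'})$ with $A,B$ nef, and the product of two nef divisors on each projective surface $E_i'$ is nonnegative. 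With that substitution the contradiction $Z_{\tilde{\Delta}}^{3}\le 0$ goes through. A second, smaller point: you cite Construction \ref{construction_divisors_polytopes} for $Z_{\tilde{\Delta}}^{3}>0$, but that construction concerns fans refining $\Sigma_{\Delta}$, which $\Sigma_{\tilde{\Delta}}$ need not do. The correct source is that $\Sigma_{\tilde{\Delta}}[1]\subset S_{F}(\Delta)$ (Theorem \ref{proposition_to_toric_resolution_diagram_morphism}) and $\ord_{\Delta}=\ord_{C(\Delta)}$ on $S_{F}(\Delta)$ give $Z_{\tilde{\Delta}}\sim\rho^{*}(D_{C(\Delta)})$ with $\rho$ birational and $D_{C(\Delta)}$ ample, whence $Z_{\tilde{\Delta}}^{3}=D_{C(\Delta)}^{3}=3!\cdot\mathrm{vol}(C(\Delta))>0$. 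With these two repairs the proof is complete.
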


	For nondegenerate toric hypersurfaces these birational models could be constructed explicitly:
	\begin{theorem}
	The toric variety $\mathbb{P}_{\tilde{\Delta}}$ has at most canonical singularities and $\mathbb{P}_{\Sigma}$ has at most terminal singularities.
	\end{theorem}

	\begin{theorem} (\cite[Thm.5.4]{Bat20}, \cite[Thm.6.3]{Gie21}) \\
		The closure $Z_{\Sigma}$ of $Z_f$ gets a minimal model of $Z_f$ and if
		\[ \dim \, \Delta = \dim \, F(\Delta) = 3 \]
		the closure $Z_{F(\Delta)}$ of $Z_f$ gets a canonical model of $Z_f$ and the morphism $Z_{\Sigma} \rightarrow Z_{F(\Delta)}$  coincides with the birational morphism $\phi$ from the minimal to the canonical model.
	\end{theorem}

	\begin{remark}
		\normalfont
	Thus for our $49$ polytopes $\Delta$ with $\textrm{dim } F(\Delta) = 3$ we have $\kappa(Z_{\tilde{\Delta}}) = 2$.
	By properties of canonical models of surfaces of general type the morphism $Z_{\Sigma} \rightarrow Z_{F(\Delta)}$ is crepant. 
	\end{remark}

\subsection{The singularities of \texorpdfstring{$Z_{\tilde{\Delta}}$}{x} and \texorpdfstring{$Z_{F(\Delta)}$}{x} for $\Delta$ canonically closed} \label{section_singularities_of_canonical_model}

\begin{construction} (The singularities of $Z_{\tilde{\Delta}}$) \\
	\normalfont
The hypersurface $Z_{\tilde{\Delta}}$ has, just as the other birational models, only isolated singularities and, assuming $\Delta = C(\Delta)$, these come from the one-dimensional singular locus of $\mathbb{P}_{\tilde{\Delta}}$ intersected with $Z_{\tilde{\Delta}}$. For this we use that if $\Delta = C(\Delta)$ by nondegeneracy $Z_{\tilde{\Delta}}$ does not pass through the torus fixed points of $\mathbb{P}_{\tilde{\Delta}}$ (\cite[Prop.5.1.3]{Tre10}). \\
Let $\tau \in \Sigma_{\tilde{\Delta}}[2]$ with associated orbit $\mathcal{O}_{\tau} \cong \mathbb{C}^{*}$ be such that $\mathcal{O}_{\tau}$ is contained in the singular locus of $\mathbb{P}_{\tilde{\Delta}}$. Locally around $x \in \mathcal{O}_{\tau} \cap Z_{\tilde{\Delta}}$ the toric variety $\mathbb{P}_{\tilde{\Delta}}$ looks like $\mathcal{O}_{\tau} \times P$ where $P$ is a normal $2$-dimensional toric variety (compare (\cite{Bat94})). \\
Then since $Z_{\tilde{\Delta}}$ intersects $\mathcal{O}_{\tau}$ transversally, it has the same singularity at $x$ as $P$. But a normal toric variety has at most $A_k$-singularities (\cite[Ch.10]{CLS11}). Note that by (\cite{Bat94}) $P$ is gotten by using the minimal $2$-dimensional sublattice $N(\tau) \subset N$ containing $\tau \cap N$ and taking the affine toric variety to the cone $\tau \subset N(\tau)_{\mathbb{R}}$. Concretely if $\tau$ has generators $\nu_i$ and $\nu_j$, then writing
\[ \nu_i - \nu_j = k \cdot (\textrm{primitive vector}) \]
$P$ has a singularity of type $A_{k-1}$ at $x \in \mathcal{O}_{\tau}$. If $\tau$ corresponds to an edge $\Gamma = \langle s,t \rangle$ of $C(\Delta)$ then $Z_{\tilde{\Delta}}$ which by Proposition \ref{proposition_iso_in_codimension_one} is isomorphic to $Z_{C(\Delta)}$ intersects $\mathcal{O}_{\tau}$ in $l$ points, where
\[ s - t = l \cdot (\textrm{primitive vector}). \]
Else $\mathcal{O}_{\tau}$ contracts to a torus fixed point on $\mathbb{P}_{C(\Delta)}$ and $Z_{\tilde{\Delta}}$ does not intersect $\mathcal{O}_{\tau}$.

\end{construction}

\begin{construction} (The singularities of $Z_{F(\Delta)}$) \\
	\normalfont
	We make the observation that in all of our examples the rays of $\Sigma$ which do not belong to $\Sigma_{F(\Delta)}[1]$ lie on the boundary on in the interior of only one $3$-dimensional cone, which we call $\sigma$. \\
	This allows us to illustrate the situation with some pictures, by taking a cross-section through this cone $\sigma$ (see Figure \ref{figure_illustration_refinement}). Here between $\Sigma_{\tilde{\Delta}}$ and $\Sigma$ we picture only those rays which lie on an edge in the cross-section and skip those refining a $3$-dimensional cone of $\Sigma_{\tilde{\Delta}}$. For if $\Delta = C(\Delta)$ then $Z_{\tilde{\Delta}}$ is nondegenerate with respect to $\tilde{\Delta}$, thus it does not pass through the torus fixed points of $\mathbb{P}_{\tilde{\Delta}}$.
	\\
	Any additional ray $\rho$, that is any point in the cross-section, introduces a toric divisor $E_{\rho}$ on $\mathbb{P}_{\Sigma}$. This divisor $E_{\rho}$ intersects $Z_{\Sigma}$ in finitely many $(-2)$-curves and if $\rho$ refines the interior of $\sigma$, then $E_{\rho} \cap Z_{\Sigma}$ gets contracted to the torus fixed point $p$ corresponding to $\sigma$. By choosing a connected neighborhood $U$ of $p$ such that $p$ is a deformation retract of $U$ we get that $E_{\rho} \cap Z_{\Sigma}$ is a deformation retract of the preimage of $U$, thus it is connected and consists of just one $(-2)$-curve.
	\\
	In this way we see that the diagram with vertices the points in the interior of $\sigma$ in the Figures \ref{figure_class_a_polytopes_normal_cone_refinement} and \ref{figure_class_b_polytopes_normal_cone_refinement} constitutes the Dynkin diagram of the singularity of $Z_{F(\Delta)}$ at the torus fixed point to $\sigma$. In these figures we use the following notations. \\
	In $a)$: \\
	$ n_1 := n_{pab} = (-1,3,1), \, n_2:= n_{pad} = (2,-3,1), \, n_3 := n_{pbd} = (0,0,-1).$ \\
	In $b)$: \\
	$n_1 := n_{pab} = (2,1,-2), n_2 := n_{pbc} = (0,-1,-1) , n_3:= n_{pcd} = (-1,-1,1), \\
	n_4:= n_{pad} = (0,1,2)$, where for example $n_{pab}$ is the inner facet normal to the facet $\langle p,a,b \rangle$.

\end{construction}

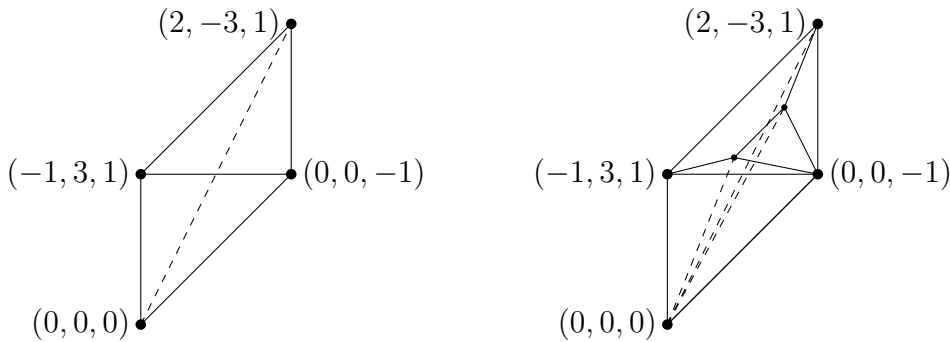
\begin{figure}[H]
	\begin{tikzpicture}[rotate=0, scale =1]
	
	\begin{scope}[xshift = 0cm]
	
	\fill (0,0) circle (2pt);
	\fill (2,2) circle (2pt);
	\fill (0,2) circle (2pt);
	\fill (2,4) circle (2pt);
	

	\node[left] at (0,2) {$(-1,3,1)$};
	\node[left] at (2,4) {$(2,-3,1)$};
	\node[right] at (2,2) {$(0,0,-1)$};
	\node[left] at (0,0) {$(0,0,0)$};

	\draw (0,0) -- (2,2);
	\draw[dashed] (0,0) -- (2,4);
	\draw (0,0) -- (0,2);
	\draw (2,2) -- (2,4);
	\draw (2,2) -- (0,2);
	\draw (2,4) -- (0,2);

	\end{scope}
	\begin{scope}[xshift = 7cm]
	
	\fill (0,0) circle (2pt);
	\fill (2,2) circle (2pt);
	\fill (0,2) circle (2pt);
	\fill (2,4) circle (2pt);
	
	\fill (2/9+2/3,14/9+2/3) circle (1.2pt);
	\fill (4/3+2/9, 8/9+2) circle (1.2pt);

	\node[left] at (0,2) {$(-1,3,1)$};
	\node[left] at (2,4) {$(2,-3,1)$};
	\node[right] at (2,2) {$(0,0,-1)$};
	\node[left] at (0,0) {$(0,0,0)$};
	
	\draw (0,0) -- (2,2) -- (2/9+2/3,14/9+2/3);
	\draw[dashed] (0,0) -- (2/9+2/3,14/9+2/3);
	\draw[dashed] (0,0) -- (4/3+2/9,8/9+2);
	\draw (2/9+2/3,14/9+2/3) -- (4/3+2/9,8/9+2);
	\draw (2,2) -- (4/3+2/9,8/9+2);
	
	\draw (2,4) -- (4/3+2/9,8/9+2);
	\draw (0,2) -- (2/9+2/3,14/9+2/3);

	\draw (0,0) -- (2,2);
	\draw[dashed] (0,0) -- (2,4);
	\draw (0,0) -- (0,2);
	\draw (2,2) -- (2,4);
	\draw (2,2) -- (0,2);
	\draw (2,4) -- (0,2);

	\end{scope}
	
	\end{tikzpicture}
	
	\caption{On the left $\sigma$ in class $a)$ is pictured and on the right a refinement of $\sigma$ as a cone is pictured, that is the added points are just rational multiples of the cone generators. This refinement shows $\Sigma_{\tilde{\Delta}}$.}
	\label{figure_illustration_refinement}
\end{figure}

\begin{example}
	\normalfont
	Consider the polytope $\langle a,b,d,b_1,c_2,d_1 \rangle$ in the class $a)$ (in the third row the third from the left): The orbit to the cone spanned by $n_1$ and $n_2$ corresponds to the edge $\langle a, d\rangle$ and does not come from an edge of $C(\Delta)$, therefore we omit the two lattice points between $n_1$ and $n_2$ in the Figure \ref{figure_class_a_polytopes_normal_cone_refinement}. The cones corresponding to the edges $\langle a,c_2 \rangle$, $\langle b_1, c_2 \rangle$, $\langle c_2, d_1 \rangle$ yield $A_2$ singularities. Since $a-c_2$, $b_1 -c_2$, $c_2 -d_1$ are primitive lattice vectors in total we get three $A_2$ singularities. On $Z_{F(\Delta)}$ we get one singularity of type $A_8$.
\end{example}

\begin{remark} \normalfont
	Note there are examples in the list where $Z_{F(\Delta)}$ has a singularity of type $D_5$, $D_7$ or $E_6$. If $\Delta$ is not canonically closed $\Delta$ has the same support vectors as its canonical closure (\cite[Cor.1.18]{Bat94}). Thus all support vectors lie again on the cone $\sigma$ and this might help in determining the singularities.

\end{remark}

\begin{figure}
	
	\begin{tikzpicture}
	
	\begin{scope}[xshift= 1cm, scale=0.7]
	
	\fill (0,0) circle(2pt);
	\fill (2,4) circle(2pt);
	\fill (4,0) circle(2pt);

	\draw (0,0) -- (4,0) -- (2,4) -- (0,0);
	
	\node[below] at (0,0) {$n_1$};
	\node[left] at (2,4) {$n_2$};
	\node[below] at (4,0) {$n_3$};
	
	\fill (2/3,4/3) circle(2pt);
	\fill (4/3,8/3) circle(2pt);
	
	
	
	\end{scope}

	\begin{scope}[xshift=5cm, scale=0.7]

	\fill (0,0) circle(2pt);
	\fill (2,4) circle(2pt);
	\fill (4,0) circle(2pt);
	
	\draw (0,0) -- (4,0) -- (2,4) -- (0,0);
	
	\node[below] at (0,0) {$n_1$};
	\node[left] at (2,4) {$n_2$};
	\node[below] at (4,0) {$n_3$};
	
	\fill (2/6+2,4/6) circle(2pt);
	\fill (2/3+2,4/3) circle(2pt);
	\draw (0,0) -- (2/6+2,4/6) -- (2/3+2,4/3) -- (2,4) -- (2/3+2,4/3) -- (4,0) -- (2/6+2,4/6);

	\fill (2/3,4/3) circle(2pt);	
	\fill (4/3,8/3) circle(2pt); 	
	\end{scope}

	\begin{scope}[yshift = -3.3cm, xshift=5cm, scale=0.7]

	\fill (0,0) circle(2pt);
	\fill (2,4) circle(2pt);
	\fill (4,0) circle(2pt);
	
	\draw (0,0) -- (4,0) -- (2,4) -- (0,0);
	
	\node[below] at (0,0) {$n_1$};
	\node[left] at (2,4) {$n_2$};
	\node[below] at (4,0) {$n_3$};
	
	\fill (4/9+4/3,8/9) circle(2pt);
	\fill (8/9+4/3,16/9) circle(2pt);
	\draw (0,0) -- (4/9+4/3,8/9) -- (8/9+4/3, 16/9) -- (2,4) -- (8/9+4/3, 16/9) -- (4,0) -- (4/9+4/3,8/9);

	\fill (2/6+2,4/6) circle(2pt);	
	\fill (2/3,4/3) circle(2pt); 	
	\fill (2/3+2,4/3) circle(2pt); 	
	\fill (4/3,8/3) circle(2pt); 	
	\fill (2/3+4/3,4/3) circle(2pt); 	
	
	\end{scope}

	\begin{scope}[yshift = -6.8cm, xshift=0cm,scale=0.7]
	
	\fill (0,0) circle(2pt);
	\fill (2,4) circle(2pt);
	\fill (4,0) circle(2pt);
	
	\draw (0,0) -- (4,0) -- (2,4) -- (0,0);
	
	\node[below] at (0,0) {$n_1$};
	\node[left] at (2,4) {$n_2$};
	\node[below] at (4,0) {$n_3$};
	
	\fill (4/9+4/3,8/9) circle(2pt);
	\fill (8/9+4/3,16/9) circle(2pt);
	
	\draw (2/9+4/3,4/9) -- (4,0) -- (8/9+4/3,16/9) -- (2,4) -- (8/9+4/3,16/9) -- (2/9+4/3,4/9) -- (0,0);

	\fill (2/3,4/3) circle(2pt); 	
	\fill (2/3+2,4/3) circle(2pt); 	
	\fill (4/3,8/3) circle(2pt); 	
	\fill (2/3+4/3,4/3) circle(2pt); 
	\fill (2/9+4/3,4/9) circle(2pt); 
	\end{scope}

	\begin{scope}[yshift = -6.8cm, xshift=3.5cm,scale=0.7]
	
	\fill (0,0) circle(2pt);
	\fill (2,4) circle(2pt);
	\fill (4,0) circle(2pt);
	
	\draw (0,0) -- (4,0) -- (2,4) -- (0,0);
	
	\node[below] at (0,0) {$n_1$};
	\node[left] at (2,4) {$n_2$};
	\node[below] at (4,0) {$n_3$};

	\draw (0,0) -- (16/7,8/7) -- (2,4) -- (16/7,8/7) -- (4,0);
	
	\fill (2/3,4/3) circle(2pt); 	
	\fill (4/3,8/3) circle(2pt); 	
	\fill (16/7, 8/7) circle(2pt); 
	\fill (8/10+2,8/10) circle(2pt); 
	\fill (8/15+8/5,16/15) circle(2pt);
	\fill (4/9+4/3,8/9) circle(2pt);
	\fill (10/15+8/5,20/15) circle(2pt);
	\fill (8/9+4/3,16/9) circle(2pt);
	
	\end{scope}

	\begin{scope}[yshift = -6.8cm, xshift=7cm,scale=0.7]
	
	\fill (0,0) circle(2pt);
	\fill (2,4) circle(2pt);
	\fill (4,0) circle(2pt);
	
	\draw (0,0) -- (4,0) -- (2,4) -- (0,0);
	
	\node[below] at (0,0) {$n_1$};
	\node[left] at (2,4) {$n_2$};
	\node[below] at (4,0) {$n_3$};

	\draw (4,0) -- (1/2+1,1) -- (2,2) -- (4,0) -- (1/2+1,1) -- (0,0) -- (1/2+1,1) -- (2,2) -- (2,4);
	
	\fill (3/2,1) circle(2pt); 
	\fill (2,2) circle(2pt); 

	\fill (2/6+2,4/6) circle(2pt);	
	\fill (2/3+2,4/3) circle(2pt); 	
	\fill (3/2+1/6,1+1/3) circle(2pt); 
	\fill (3/2+2/6,1+2/3) circle(2pt); 
	
	\fill (4/9+4/3,8/9) circle(2pt);
	\fill (8/9+4/3,16/9) circle(2pt);
	
	\end{scope}

	\begin{scope}[yshift = -6.8cm, xshift=10.5cm,scale=0.7]
	
	\fill (0,0) circle(2pt);
	\fill (2,4) circle(2pt);
	\fill (4,0) circle(2pt);
	
	\draw (0,0) -- (4,0) -- (2,4) -- (0,0);
	
	\node[below] at (0,0) {$n_1$};
	\node[left] at (2,4) {$n_2$};
	\node[below] at (4,0) {$n_3$};

	\draw (4,0) -- (4/9+8/3,8/9) -- (0,0) -- (4/9+8/3,8/9) -- (2,4);
	
	\fill (4/9+8/3, 8/9) circle(2pt); 

	\fill (2/6+2,4/6) circle(2pt);	
	\fill (2/3,4/3) circle(2pt); 	
	\fill (4/3,8/3) circle(2pt); 	
	
	\end{scope}

	\begin{scope}[yshift = -10.5cm, xshift=2cm,scale=0.7]
	
	\fill (0,0) circle(2pt);
	\fill (2,4) circle(2pt);
	\fill (4,0) circle(2pt);
	
	\draw (0,0) -- (4,0) -- (2,4) -- (0,0);
	
	\node[below] at (0,0) {$n_1$};
	\node[left] at (2,4) {$n_2$};
	\node[below] at (4,0) {$n_3$};

	\draw (4,0) -- (2/9+4/3,4/9) -- (10/9+4/3,20/9) -- (4,0) -- (2/9+4/3,4/9) -- (0,0) -- (2,4) -- (10/9+4/3,20/9);
	
	\fill (2/9+4/3,4/9) circle(2pt); 
	\fill (10/9+4/3, 20/9) circle(2pt); 

	\fill (4/9+4/3,8/9) circle(2pt);
	\fill (8/9+4/3,16/9) circle(2pt);
	\fill (2/3+4/3,4/3) circle(2pt); 
	
	\fill (2/3,4/3) circle(2pt); 	
	\fill (4/3,8/3) circle(2pt); 	
	
	\end{scope}

	\begin{scope}[yshift = -10.5cm, xshift=6cm,scale=0.7]
	
	\fill (0,0) circle(2pt);
	\fill (2,4) circle(2pt);
	\fill (4,0) circle(2pt);
	
	\draw (0,0) -- (4,0) -- (2,4) -- (0,0);
	
	\node[below] at (0,0) {$n_1$};
	\node[left] at (2,4) {$n_2$};
	\node[below] at (4,0) {$n_3$};

	\draw (4,0) -- (2/9+8/3,4/9) -- (4/9+8/3,8/9) -- (4,0) -- (2/9+8/3,4/9) -- (0,0) -- (2,4) -- (4/9+8/3,8/9);
	
	\fill (2/9+8/3,4/9) circle(2pt); 
	\fill (4/9+8/3, 8/9) circle(2pt); 

	\fill (2/3,4/3) circle(2pt); 	
	\fill (4/3,8/3) circle(2pt); 	
	\end{scope}

	\begin{scope}[yshift = -10.5cm, xshift=10cm,scale=0.7]
	
	\fill (0,0) circle(2pt);
	\fill (2,4) circle(2pt);
	\fill (4,0) circle(2pt);
	
	\draw (0,0) -- (4,0) -- (2,4) -- (0,0);
	
	\node[below] at (0,0) {$n_1$};
	\node[left] at (2,4) {$n_2$};
	\node[below] at (4,0) {$n_3$};

	\draw (4,0) -- (3/2,1) -- (7/6+1,7/3) -- (10/9+4/3,20/9) -- (4,0) -- (0,0) -- (3/2,1) -- (7/6+1,7/3) -- (2,4) -- (10/9+4/3,20/9);
	
	\fill (3/2,1) circle(2pt); 
	\fill (10/9+4/3, 20/9) circle(2pt); 
	\fill (7/6+1, 7/3) circle(2pt); 
	
	\fill (3/8+21/24+3/4,1/4+21/12) circle(2pt);
	\fill (6/8+14/24+2/4,2/4+14/12) circle(2pt);
	\fill (9/8+7/24+1/4,3/4+7/12) circle(2pt);	
	
	\fill (2/6+2,4/6) circle(2pt);
	\fill (4/9+4/3,8/9) circle(2pt);

	\end{scope}

	\begin{scope}[yshift = -14cm, xshift=5cm,scale=0.7]
	
	\fill (0,0) circle(2pt);
	\fill (2,4) circle(2pt);
	\fill (4,0) circle(2pt);
	
	\draw (0,0) -- (4,0) -- (2,4) -- (0,0);
	
	\node[below] at (0,0) {$n_1$};
	\node[left] at (2,4) {$n_2$};
	\node[below] at (4,0) {$n_3$};

	\draw (4,0) -- (2/9+4/3,4/9) -- (4/12+1,8/12) -- (7/6+1,7/3) -- (10/9+4/3,20/9) -- (4,0) -- (0,0) -- (2/9+4/3,4/9) -- (4/12+1,8/12) -- (0,0) -- (2,4) -- (7/6+1,7/3) -- (10/9+4/3,20/9) -- (2,4);

	\fill (10/9+4/3, 20/9) circle(2pt); 
	\fill (7/6+1, 7/3) circle(2pt); 

	\fill (2/9+4/3,4/9) circle(2pt); 
	\fill (4/12+1, 8/12) circle(2pt); 
	
	\fill (7/30+1/5+16/60+4/5,7/15+32/60) circle(2pt);
	\fill (14/30+2/5+12/60+3/5,14/15+24/60) circle(2pt);
	\fill (21/30+3/5+8/60+2/5,21/15+16/60) circle(2pt);
	\fill (28/30+4/5+4/60+1/5,28/15+8/60) circle(2pt);
	
	
	\end{scope}

	\end{tikzpicture}
	
	\caption{A cross-section of the cone $\sigma$ for all canonically closed polytopes in $a)$ with the refinement $\Sigma_{\tilde{\Delta}}$ and the position of the additional vectors from $S_{F}(\Delta)$ lying on an edge in the cross-section and yielding singularities of $Z_{\tilde{\Delta}}$. The order is the same as in Figure \ref{figure_complete_list_polytopes_a}.} \label{figure_class_a_polytopes_normal_cone_refinement}
\end{figure}
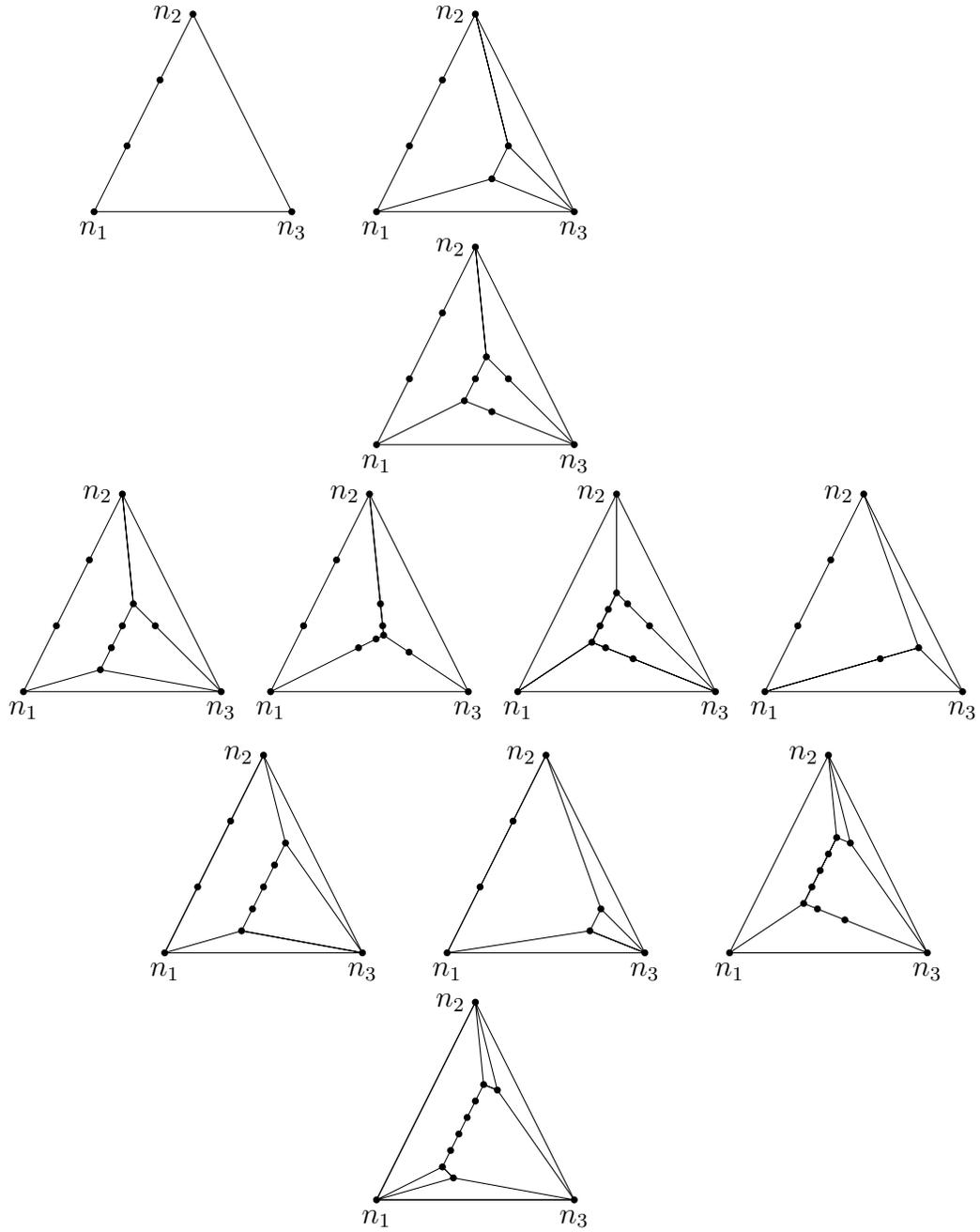

\input{Pictures_5}

\section{Kanev and Todorov surfaces}
\label{chapter_constructing_Kanev_Todorov_surfaces}

		\underline{Notation:} Let $Y$ be a complex algebraic surface with at most R.D.P., let
	\begin{itemize}
		\setlength\itemsep{0em}
		\item $p_g(Y) := h^{0}(Y, \mathcal{O}_Y(K_Y))$ the geometric genus of $Y$
		\item $q(Y) := h^{0}(Y, \Omega_{Y}^1)$ the irregularity of $Y$ 
		\item $\chi(Y, \mathcal{O}_Y)$: The euler characteristic of the structure sheaf
		\item $e(Y)$: The topological euler number.
	\end{itemize}
	
	\begin{construction} \label{constr_geom_genus_Kanev_Todorov_surfaces}
		\normalfont
	By (\cite[Thm.4.2, Prop.5.13]{Gie21}) we have for our $49$ examples
	\[ p_g(Z_{\Sigma}) = l(F(\Delta)) = l^*(\Delta) = 1, \]
	and $q(Z_{\Sigma}) = 0$. In (\cite[Appendix 3]{Sch18}) the euler number of $Y:= Z_{\Sigma}$ has already been computed for all of the $49$ polytopes. Namely $e(Y) = 23$ in $a)$ and $b)$ and $e(Y) = 22$ in $c)$, $d)$ and $e)$. Then we may deduce from Noether's formula (\cite[Ch.1, Thm.(5.5)]{BHPV04})
	\[ 2 = 1 - q(Y) + p_g(Y)  = \chi(Y, \mathcal{O}_{Y}) = \frac{1}{12}(K_{Y}^{2} + e(Y)) \]
	that $K_{Y}^2 = 1$ or $K_{Y}^2 = 2$. Minimal surfaces $Y$ with
	\[ p_{g}(Y) = 1, \quad q(Y) = 0, \quad  K_Y^2 = 1, \]
	are known as \textit{Kanev surfaces} (compare (\cite{Cat78})), whereas minimal surfaces $Y$ with
	\[ p_{g}(Y) = 1, \quad q(Y) = 0, \quad  K_Y^2 = 2, \]
	are known as \textit{Todorov surfaces.} 
	\end{construction}
	We obtain
	\begin{theorem}
		Let $\Delta$ be a polytope out of the $49$ examples. Then $Y:= Z_{\Sigma}$ gets a Kanev surface in $a)$ and $b)$ and a Todorov surface in $c)$, $d)$ and $e)$.
	\end{theorem}
	\qed

\begin{remark}
	\normalfont
	Note that $K_Y^2 =2$ if and only if $l^*(\Delta_{can}) = 3$ and else $K_Y^2 = 1$ and $l^*(\Delta_{can}) = 2$. This is no accident since by the adjunction formula
	\[ 2p_g(K_{Z_{\Delta}}) - 2 = 2 \cdot K_{Z_{\Delta}}^2 \Rightarrow K_{Z_{\Delta}}^2 = p_g(K_{Z_{\Delta}}) - 1. \]
	Note that by Construction \ref{specializing_facts_to_canonical_Fano_3_topes}, at least if $\Delta = C(\Delta)$, and by (\cite[Thm.4.2]{Gie21}) we have
	\[  p_g(K_{Z_{\Delta}}) = l^*(\Delta_{can}). \]
	But it seems to be tedious to deduce the analogous formula for $K_{Z_{\Sigma}}$ from this.
	
\end{remark}

	\subsection{Relating properties of Kanev/Todorov surfaces in general and as toric hypersurfaces}
	
	By (\cite[section 8]{Bomb73}) a Kanev surface $Y$ is simply connected and the condition $q(Y) = 0$ in the definition is actually superfluous. We may also compute the Hodge numbers of $Y$: The only nonzero Hodge numbers are
	\[ h^{0,0} = h^{2,2} = 1, \quad h^{0,2} = h^{2,0} = 1, \quad h^{1,1} = 19 \]	
	Similarly for Todorov surfaces the only nonzero Hodge numbers are
	\[ h^{0,0} = h^{2,2} = 1, \quad  h^{2,0} = h^{0,2} = 1, \quad h^{1,1} = 18. \]
	The following result is well known (\cite[Ch.7, Cor.(5.4)]{BHPV04}).
	\begin{proposition} \label{proposition_plurigenera_surfaces_of_gen_type}
		Let $Y$ be a minimal or canonical surface of general type, then we have the following formula for the plurigenera
		\[ P_{n}(Y) := h^{0}(Y, \mathcal{O}_Y(nK_Y)) = \chi(Y, \mathcal{O}_Y) + \frac{n(n-1)}{2} K_Y^2 \quad \textrm{ for } n \geq 2 \]
	\end{proposition}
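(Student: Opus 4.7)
The plan is to apply Riemann--Roch on the surface and to show that the higher cohomology of $nK_Y$ vanishes for $n \geq 2$, so that $P_n(Y)$ equals $\chi(Y, \mathcal{O}_Y(nK_Y))$.

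First, I would apply the Riemann--Roch theorem for a divisor $D$ on a (possibly singular with RDPs) surface $Y$ in the form
\[ \chi(Y, \mathcal{O}_Y(D)) = \chi(Y, \mathcal{O}_Y) + \frac{1}{2} D \cdot (D - K_Y). \]
Since RDPs are rational singularities and $K_Y$ is Cartier on a canonical model of a surface of general type (or $Y$ is smooth in the minimal case), this formula holds without modification. Substituting $D = nK_Y$ gives
\[ \chi(Y, \mathcal{O}_Y(nK_Y)) = \chi(Y, \mathcal{O}_Y) + \frac{n(n-1)}{2} K_Y^2, \]
which is exactly the desired right-hand side. Hence it remains to prove $h^1(Y, \mathcal{O}_Y(nK_Y)) = h^2(Y, \mathcal{O}_Y(nK_Y)) = 0$ for $n \geq 2$.

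For the vanishing of $h^2$, I would use Serre duality to write $h^2(Y, nK_Y) = h^0(Y, (1-n)K_Y)$. If this space were nonzero, it would produce an effective divisor $D$ linearly equivalent to $-(n-1)K_Y$. Since $K_Y$ is nef on a minimal or canonical model of general type, $D \cdot K_Y \geq 0$; on the other hand, $D \cdot K_Y = -(n-1) K_Y^2 < 0$ for $n \geq 2$ because $K_Y^2 > 0$ by the general type assumption (via $\kappa(Y) = 2$ and $K_Y$ nef and big, cf.\ \cite[Thm.2.2.16]{Laz00}). This contradiction gives $h^2 = 0$.

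For the vanishing of $h^1$, which is the main step, I would invoke the Kawamata--Viehweg vanishing theorem: writing $nK_Y = K_Y + (n-1)K_Y$ with $(n-1)K_Y$ nef and big for $n \geq 2$, we get $H^i(Y, K_Y + (n-1)K_Y) = 0$ for all $i > 0$. In the canonical model case, Kawamata--Viehweg applies because $Y$ has only rational (Du Val) singularities, so one can pull back to a resolution $\pi: \tilde{Y} \to Y$, apply the vanishing for $\pi^{*} K_Y$ (which is still nef and big), and push down using $R^i \pi_{*} \mathcal{O}_{\tilde{Y}} = 0$ for $i > 0$. The main obstacle is this singular setting: one has to justify the vanishing despite $Y$ being only mildly singular, but crepancy of the resolution and the rationality of RDPs make the reduction routine. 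Combining both vanishings with Riemann--Roch yields the formula.
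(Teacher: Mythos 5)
Your proof is correct and is the standard argument: Riemann--Roch for $nK_Y$ combined with the vanishing of $h^1$ and $h^2$, the latter via nefness of $K_Y$ and Serre duality, the former via vanishing for $K_Y+(n-1)K_Y$ with $(n-1)K_Y$ nef and big, reduced to the smooth case through the crepant minimal resolution when $Y$ has R.D.P. The paper gives no proof of its own, citing \cite[Ch.7, Cor.(5.4)]{BHPV04}, where essentially the same reasoning appears (with Ramanujam/Mumford vanishing in place of Kawamata--Viehweg, an inessential difference here).
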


	\begin{remark}
	\normalfont
	In the case of toric hypersurfaces Theorem 4.2 in \cite{Gie21} does not only compute $p_g(Y)$ but all plurigenera $P_n(Y)$ for $n \geq 1$ by the formula
	\[ P_n(Y) = l(n \cdot F(\Delta)) - l^*((n-1)F(\Delta)).  \]

\end{remark}

	\begin{figure} 
		\begin{tikzpicture}[scale=0.9]
			
				\begin{scope}[scale = 1.5,yshift = 0cm, xshift = 0cm, rotate = -16]
			
			\draw (3/2,-1,5/2) -- (3,0,0);
			\draw (0,0,3) -- (0,3,0) -- (3/2,-1,5/2) -- (0,3,0) -- (3,0,0);
			\draw (0,0,3) -- (3/2,-1,5/2);
			
			\draw[dashed] (3,0,0) -- (0,0,3);
			
			\draw[dashed] (3/2,0,3/2) -- (0,3,0);
			\draw[dashed] (3/2,-1,5/2) -- (3/2,0,3/2);
			

			\fill (3/2,-1,5/2) circle (2pt) node[below] {$a$};
			\fill (3,0,0) circle (2pt) node[right] {$d$};
			\fill (0,3,0) circle (2pt) node[left] {$p$};
			\fill (0,0,3) circle (2pt) node[left] {$b$};

			\fill (1,1/3,5/2-5/6) circle (1.4pt) node[left] {$a_1$};
			\fill (1/2,5/3,5/2-10/6) circle(1.4pt) node[left] {$a_2$};
		
			\node at (3/2,0,3/2) {$\circ$};
			\node at (3/2+0.25,0+0.1,3/2+0.2) {$c$};
			\node at (3/6,2,3/6) {$\circ$};
			\node at (3/6+0.3,2+0.1,3/6+0.2) {$c_2$};
			\node at (1,1,1) {$\circ$};
			\node at (1.3,1.1,1.2) {$c_1$};
			\fill (3/2,3/2,0) circle(1.4pt) node[right] {$d_1$};
			\fill (0,3/2,3/2) circle(1.4pt) node[left] {$b_1$};
		
			\node at (3/2,-1/3,3/2+1/3) {$\circ$};
			\node at (3/2,-2/3,3/2+2/3) {$\circ$};
		
			\node at (1/2,1/2,2) {$\circ$};
			\node at (1/2-0.3,1/2-0.1,2-0.2) {$bc$};
			\node at (2,1/2,1/2) {$\circ$};
			\node at (2+0.3,1/2+0.1,1/2+0.2) {$cd$};

			\fill (1/2,1/6,5/6+3/2) circle (1.4pt) node[below] {$ab$};
			\fill (1/2+3/2,1/6,5/6) circle (1.4pt) node[below] {$ad$};
			
			\node at (1,2/3,8/6) {$\circ$};
			\node at (1+0.23, 2/3+0.1,8/6+0.2) {$0$};

			\end{scope}

		\begin{scope}[xshift = 7cm, yshift = -2cm, rotate= 0, scale =1.1]
		\draw (0,0,3) -- (3,0,0);
		\draw (3,0,0) -- (3,3,0);
		\draw[dashed] (3,3,0) -- (1,4,2);
		\draw[dashed] (1,4,2) -- (0,0,3);
		
		\draw (3,0,0) -- (-1,5,-4);
		\draw (0,0,3) -- (-1,5,-4);
		\draw (3,3,0) -- (-1,5,-4);
		\draw[dashed] (1,4,2) -- (-1,5,-4);
		
		\draw[dashed] (3,0,0) -- (1,4,2);
		\draw[dashed] (0,0,3) -- (3,3,0);
		
		\node at (1,1,2) {$\circ$};
		\node at (2,2,1) {$\circ$};
		
		\fill (-1,5,-4) circle (2.7pt) node[right] {$p$};
		\fill (0,0,3) circle (2.7pt) node[left] {$a$};
		\fill (3,0,0) circle (2.7pt) node[right] {$d$} ;
		\fill (3,3,0) circle (2.7pt) node[right] {$c$};
		
		\node at (1,4,2) {$\circ$};
		\node at (1 - 0.3,4 - 0.1,2 - 0.2) {$b$};
		
		\fill (-1/3,5/3,2/3) circle (1.8pt) node[left] {$a_1$};
		\fill (-2/3,10/3, -5/3) circle(1.8pt) node[left] {$a_2$};
		\fill (5/3,11/3,-4/3) circle(1.8pt) node[right] {$c_1$};
		\fill (3-8/3,3+4/3,-8/3) circle(1.8pt) node[right] {$c_2$};
		
		\node at (0,4+1/2,-1) {$\circ$};
		\node at (0.35,4+1/2+0.1,-1+0.2) {$b_1$};
		\fill (1,5/2,-2) circle(1.8pt) node[left] {$d_1$};
		
		\node at (1/2+5/6,2+11/6,1-2/3) {$\circ$};
		\node at (1/2+5/6-0.35, 2+ 11/6, 1- 2/3-0.2) {$bc$};
		
		\node at (1/3, 5/6+2,1/3+1) {$\circ$};
		\node at (1/3, 5/6+2+0.3,1/3+1) {$ab$};
		
		\fill (3/2-1/6,5/6,2/6) circle (1.8pt) node[left] {$ad$};
		\fill (3/2+5/6, 11/6,-2/3) circle (1.8pt) node[above] {$cd$};
		
		\node at (2/3,8/3,-1/3) {$\circ$};
		\node at (2/3+0.3, 8/3,-1/3+0.2) {$0$};
		
		\end{scope}

		\end{tikzpicture}
		\caption{The maximal polytope in the class $a)$ on the left and in the class $b)$ on the right. In both cases $l(\Delta) = 18$.} \label{figure_maximal_polytopes_a_and_b}
	\end{figure}
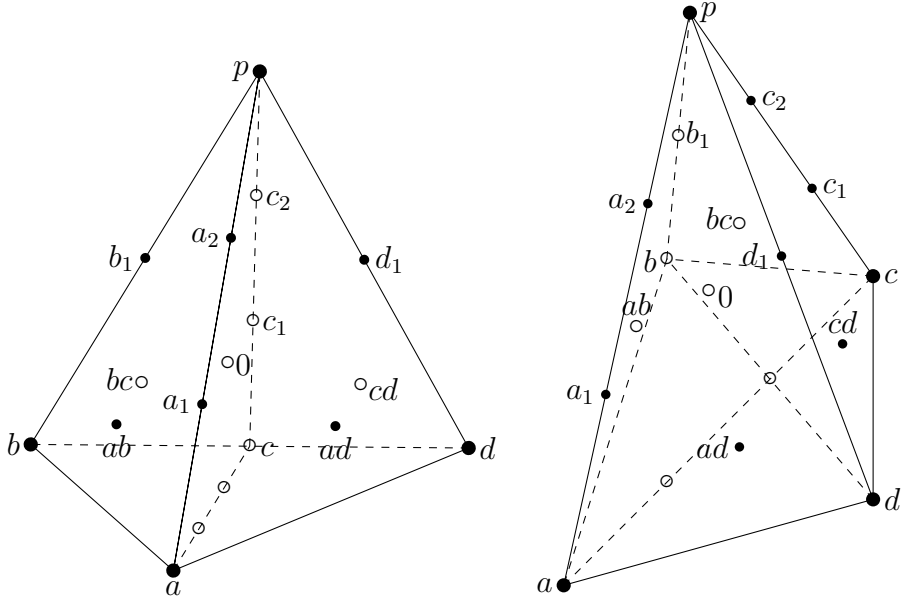

	

	\begin{example} (Kanev's original example) \label{kanev's_example_of_kanev_surface} \\
	\normalfont
	The first Kanev surface was found in (\cite{Kan76}) via the following Fermat polynomial
	\begin{align*} f(x_0,...,x_3) := x_0^6 + x_1^6 + x_2^6 + x_3^6,
	\end{align*}
	Let $X:= \{f=0\} \subset \mathbb{P}^3$, $G := \mathbb{Z}/2\mathbb{Z} \times \mathbb{Z}/3\mathbb{Z}$. Then $G$ acts on $\mathbb{P}^3$ via
	\[ g = (-1,\epsilon): (x_{0}:x_{1}:x_{2}:x_{3}) \mapsto (x_{0}:\epsilon x_{1}:\epsilon^{2} x_{2}:-x_{3}) \]
	where $\epsilon := e^{\frac{2\pi i}{3}}$, and $G$ leaves $X$ invariant. The quotient $X/G$ is a singular hypersurface in the toric variety $\mathbb{P}^{3}/G$. Letting $Y_{min}$ denote the minimal resolution of $X/G$, $Y_{min}$ gets a Kanev surface. 
	\\
	By \cite[Example 6.1]{BKS19} the hypersurface $(X/G) \cap T$ has Newton polytope:
	\[ \Delta' := \langle (1,2,4), (1,0,0), (1,4,2), (-2,-4,-5) \rangle \]
	This polytope is isomorphic to the maximal polytope in a). Note that in this example as well as in the example $c)$ the toric variety $\mathbb{P}_{\tilde{\Delta}}$ is almost a weighted projective space, more precisely a fake weighted projective space, that is $\rk \, \Cl(\mathbb{P}_{\tilde{\Delta}}) = 1$.
\end{example}

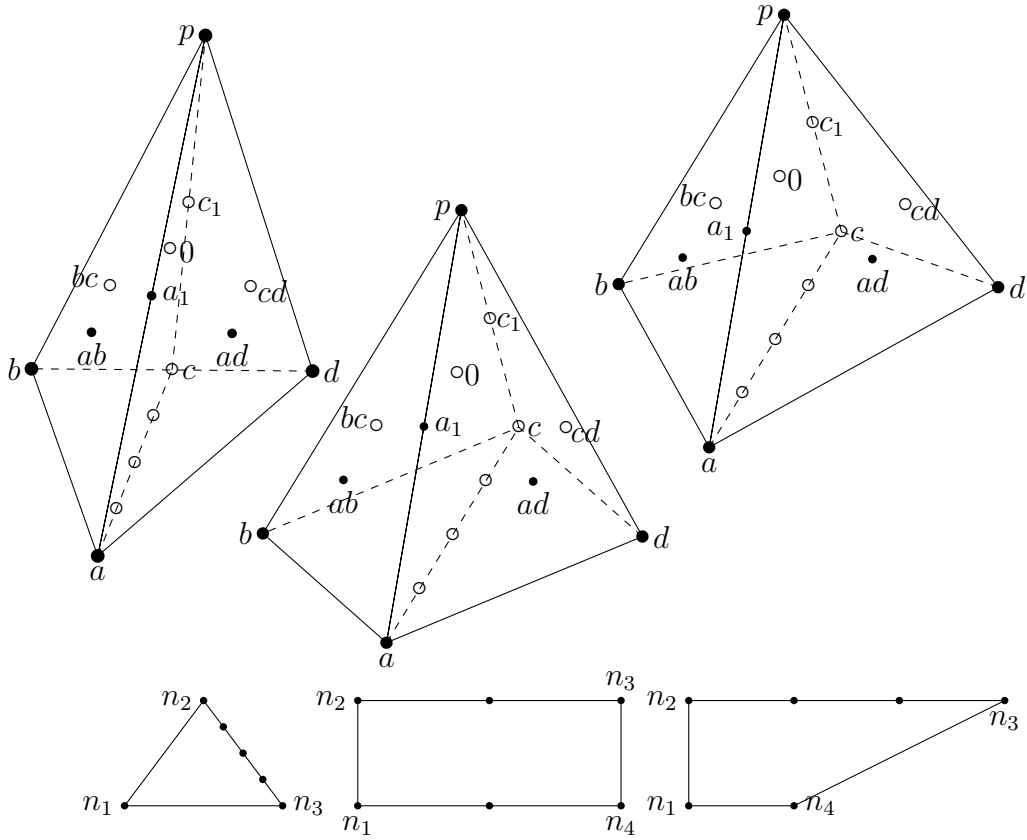
\begin{figure}
	\begin{tikzpicture}
		
	\begin{scope}[scale = 1.3,yshift = 0cm, xshift = 0cm, rotate = -16]
	
	\draw (3/2,-4/3,3/2+4/3) -- (2,0,0);
	\draw (0,0,2) -- (0,3,0) -- (3/2,-4/3,3/2+4/3) -- (0,3,0) -- (2,0,0);
	\draw (0,0,2) -- (3/2,-4/3,3/2+4/3);
	
	\draw[dashed] (2,0,0) -- (0,0,2);
	
	\draw[dashed] (1,0,1) -- (0,3,0);
	\draw[dashed] (3/2,-4/3,3/2+4/3) -- (1,0,1);
	

	\fill (3/2,-4/3,17/6) circle (2pt) node[below] {$a$};
	\fill (2,0,0) circle (2pt) node[right] {$d$};
	\fill (0,3,0) circle (2pt) node[left] {$p$};
	\fill (0,0,2) circle (2pt) node[left] {$b$};

	\fill (3/4,-4/3+13/6,17/12) circle (1.4pt) node[right] {$a_1$};
	
	\node at (1,0,1) {$\circ$};
	\node at (1+0.25,0+0.1,1+0.2) {$c$};
	\node at (1/2,3/2,1/2) {$\circ$};
	\node at (1/2+0.3,3/2+0.1,1/2 + 0.2) {$c_1$};

	\node at (1+1/8,-1/3,3/4 + 17/24) {$\circ$};
	\node at (1+2/8,-2/3,2/4+17/12) {$\circ$};
	\node at(1+3/8, -3/3,1/4+51/24) {$\circ$};
	
	\node at (1/4,3/4,1/4+1) {$\circ$};
	\node at (1/4-0.2,3/4+0.1,1/4+1+0.2) {$bc$};
	\node at (1/4+1,3/4,1/4) {$\circ$};
	\node at (1/4+1+0.3,3/4+0.1,1/4+0.2) {$cd$};
	
	\fill (3/8,-4/6+13/12, 17/24+1) circle (1.4pt) node[below] {$ab$};
	\fill (3/8+1, -4/6+13/12,17/24) circle (1.4pt) node[below] {$ad$};
	
	\node at (3/8+1/4, -4/6+13/12+3/4, 17/24+ 1/4) {$\circ$};
	\node at (3/8+1/4+0.25, -4/6+13/12+3/4+0.1, 17/24+ 1/4+0.2) {$0$};
	
	\end{scope}

	\begin{scope}[scale = 1.3,yshift = -1.5cm,scale=0.9, xshift = 3cm, rotate = -16]
	
	\draw (3/2,-1,5/2) -- (3,0,0);
	\draw (0,0,3) -- (0,3,0) -- (3/2,-1,5/2) -- (0,3,0) -- (3,0,0);
	\draw (0,0,3) -- (3/2,-1,5/2);
	
	\draw[dashed] (3,0,0) --  (3/2,3/3,3/2-3/3) -- (0,0,3);
	
	\draw[dashed] (3/2,3/3,3/2-3/3) -- (0,3,0);
	\draw[dashed] (3/2,-1,5/2) -- (3/2,3/3,3/2-3/3);
	

	\fill (3/2,-1,5/2) circle (2pt) node[below] {$a$};
	\fill (3,0,0) circle (2pt) node[right] {$d$};
	\fill (0,3,0) circle (2pt) node[left] {$p$};
	\fill (0,0,3) circle (2pt) node[left] {$b$};

	\fill (3/4,1,5/4) circle (1.4pt) node[right] {$a_1$};
	
	\node at (3/2,1,1/2) {$\circ$};
	\node at (3/2+ 0.25,1+0.1,3/2 - 1+0.2) {$c$};
	\node at (3/4,2,1/4) {$\circ$};
	\node at (3/4+0.3,2+0.1,1/4+0.2) {$c_1$};
	
	\node at (3/2,1/2,3/2-1/2) {$\circ$};
	\node at (3/2,0,3/2) {$\circ$};
	\node at(3/2, -1/2,3/2+1/2) {$\circ$};
	
	\node at (3/8,1,1/8+3/2) {$\circ$};
	\node at (3/8-0.2, 1+0.1,1/8+3/2+0.2) {$bc$};
	\node at (3/8+3/2,1,1/8) {$\circ$};
	\node at (3/8+3/2+0.3, 1+0.1,1/8+0.2) {$cd$};
	
	\fill (3/8,1/2,5/8+3/2) circle (1.4pt) node[below] {$ab$};
	\fill (3/8+3/2,1/2,5/8) circle (1.4pt) node[below] {$ad$};
	
	\node at (3/4, 3/2, 3/4) {$\circ$};
	\node at (3/4+0.25,3/2+0.1, 3/4+0.2) {$0$};
	
	\end{scope}

	\begin{scope}[scale = 1.3,yshift = 0.5cm, xshift = 6cm, scale =0.9, rotate = -16]
	
	\draw (3/2,-1,5/2) -- (3,1/2,-1/2);
	\draw (0,1/2,5/2) -- (0,3,0) -- (3/2,-1,5/2) -- (0,3,0) -- (3,1/2,-1/2);
	\draw (0,1/2,5/2) -- (3/2,-1,5/2);
	
	\draw[dashed] (3,1/2,-1/2) --  (3/2,3/3,3/2-3/3) -- (0,1/2,5/2);
	
	\draw[dashed] (3/2,3/3,3/2-3/3) -- (0,3,0);
	\draw[dashed] (3/2,-1,5/2) -- (3/2,3/3,3/2-3/3);
	

	\fill (3/2,-1,5/2) circle (2pt) node[below] {$a$};
	\fill (3,1/2,-1/2) circle (2pt) node[right] {$d$};
	\fill (0,3,0) circle (2pt) node[left] {$p$};
	\fill (0,1/2,5/2) circle (2pt) node[left] {$b$};

	\fill (3/4,1,5/4) circle (1.4pt) node[left] {$a_1$};
	
	\node at (3/2,1,1/2) {$\circ$};
	\node at (3/2+ 0.25,1+0.1,3/2 - 1+0.2) {$c$};
	\node at (3/4,2,1/4) {$\circ$};
	\node at (3/4+0.3,2+0.1,1/4+0.2) {$c_1$};
	
	\node at (3/2,1/2,3/2-1/2) {$\circ$};
	\node at (3/2,0,3/2) {$\circ$};
	\node at(3/2, -1/2,3/2+1/2) {$\circ$};
	
	\node at (3/8,1+1/4, 1/8+5/4) {$\circ$};
	\node at (3/8-0.2, 1+1/4+0.1,1/8+5/4+0.2) {$bc$};
	\node at (3/8+3/2,1+1/4,1/8-1/4) {$\circ$};
	\node at (3/8+3/2+0.3, 1+1/4+0.1,1/8-1/4+0.2) {$cd$};
	
	\fill (3/8,1/2+1/4,5/8+5/4) circle (1.4pt) node[below] {$ab$};
	\fill (3/8+3/2, 1/2+1/4,5/8-1/4) circle (1.4pt) node[below] {$ad$};
	
	\node at (3/4, 3/2, 3/4) {$\circ$};
	\node at (3/4+0.25,3/2+0.1, 3/4+0.2) {$0$};
	
	\end{scope}

	\begin{scope}[yshift = -6.5cm, scale=0.7]
	
	\fill (0,0) circle(2pt);
	\fill (3/2,2) circle (2pt);
	\fill (3,0) circle(2pt);
	
	\draw (0,0) -- (3,0) -- (3/2,2) -- (0,0);
	
	\node[left] at (0,0) {$n_1$};
	\node[left] at (3/2,2) {$n_2$};
	\node[right] at (3,0) {$n_3$};
	
	\fill (3/2+3/8,3/2) circle(2pt);
	\fill (3/2 + 6/8,1) circle(2pt);
	\fill (3/2 + 9/8,1/2) circle(2pt);
	
	\end{scope}
	
	\begin{scope}[yshift = -6.5cm, xshift=4.5cm, scale=0.7]
	
	\fill (3,0) circle(2pt);
	\fill (3,2) circle (2pt);
	\fill (-2,2) circle(2pt);
	\fill (-2,0) circle(2pt);
	
	\draw (3,0) -- (-2,0) -- (-2,2) -- (3,2) -- (3,0);
	
	\node[below] at (3,0) {$n_4$};
	\node[above] at (3,2) {$n_3$};
	\node[left] at (-2,2) {$n_2$};
	\node [below] at (-2,0) {$n_1$};
	
	\fill (+1/2,2) circle(2pt);
	\fill (+1/2,0) circle(2pt);
	
	\end{scope}
	
	\begin{scope}[yshift = -6.5cm, xshift=7.5cm, scale=0.7]
	
	\fill (0,0) circle(2pt);
	\fill (0,2) circle(2pt);
	\fill (6,2) circle(2pt);
	\fill (2,0) circle(2pt);
	
	\draw (0,0) -- (0,2) -- (6,2) -- (2,0) -- (0,0);
	
	\node[left] at (0,0) {$n_1$};
	\node[left] at (0,2) {$n_2$};
	\node[below] at (6,2) {$n_3$};
	\node [right] at (2,0) {$n_4$};
	
	\fill (2,2) circle(2pt);
	\fill (4,2) circle(2pt);
	
	\end{scope}

	\end{tikzpicture}
	\caption{The polytopes $c), \, d)$ and $e)$ and their cone $\sigma$ from left to right} \label{figure_3_polytopes_Todorov_type_surfaces}
\begin{align*}
c): \quad & \Delta = \langle a = (2,1,5), p = (-2,-1,-3), b=(2,0,1), d= (2,2,1) \rangle  \\ 
& F(\Delta) = \langle (0,0,0),(1,1/2,2),(1,1/4,1),(1,3/4,1) \rangle
\\
d) \quad & \Delta = \langle a=(2,-1,3), b=(2,0,1), c=(2,-1,-1), d= (2,-2,1), p = (-2,1,-1) \rangle \\
& F(\Delta) = \langle(0,0,0), (1,-1/2,1), (1,-1/2,0), (1,-3/4,1/2), (1,-1/4,1/2) \rangle \\
e) \quad & \Delta = \langle a = (2,0,1), b= (2,1,-1), c = (2,4,-3), d= (2,1,1) ,p = (-2,-2,1) \rangle
\\
& F(\Delta) = \langle(0,0,0), (1,3/2,-1),(1,3/4,0), (1,1/2,0),(1,3/4,-1/2) \rangle
\end{align*}
\end{figure}

Let $Y:= Z_{\Sigma}$ be a Todorov surface embedded in the toric $3$-fold $\mathbb{P}_{\Sigma}$ and consider the (rational) map $\psi_{nK_Y}$ associated to the complete linear system $\mathbb{P}H^0(Y, nK_Y)$. By (\cite[Prop.6.2]{Gie21}), since $Y$ sits in a toric $3$-fold, all these maps are in fact morphisms and in particular taking $n=2$ we get a morphism $\psi_{2K_Y}: Y \rightarrow \mathbb{P}^3$.

\begin{proposition}
	In all $3$ cases c), d) and e) the image $\psi_{2K_Y}(Y)$ is a quadric cone. The canonical divisor $K_Y$ defines a hyperelliptic curve of genus $3$ respectively. \\	
	In c) $Y$ has $2$ singularities of type $A_3$, in d) $Y$ has $4$ singularities of type $A_1$ and in e) $Y$ has $2$ singularities of type $A_2$. The fundamental group of $Y$ is $\mathbb{Z}/2 \mathbb{Z}$.
\end{proposition}

\begin{proof}
According to the pictures c), d) and e) in Construction \ref{specializing_facts_to_canonical_Fano_3_topes} if we let 
$ 2 \cdot F(\Delta)_{can} \cap M = \{ y_1,y_2,y_3 \}$ chosen as in the pictures, we get in all $3$ cases the relation $y_2^2 = y_1y_3$. Since by (\cite[Thm.4.2]{Gie21}) for $y_0 := (0,0,0)$
\[ H^0(Y,2K_Y) \cong \langle y_0, y_1, y_2,y_3 \rangle   \]
in all $3$ cases $\psi_{2K_Y}(Y) \subset \mathbb{P}^3$ is a quadric cone. The canonical curve $K_Y$ is hyperelliptic since in all cases we have a fibering $D_{can} \rightarrow \mathbb{P}^1$ given by projecting onto the axis $\langle a,c \rangle$ and the hypersurface $Y$ intersects a general fibre in $2$ points, as could be checked with a Newton polytope type argument. The computation of the singularities follows from Figure \ref{figure_3_polytopes_Todorov_type_surfaces} and the result on the fundamental group follows from (\cite{CD89}).
\end{proof}

\begin{table} 
	\caption{polytopes such that $\Delta_{can}$ has $3$ vertices sorted as in Figure \ref{figure_complete_list_polytopes_a} from the top to the bottom and from left to right. The arrows indicate that the polytopes are not canonically closed and the ID of the canonical closure is the polytope above the arrows (e.g. ID5389063 has canonical closure ID546219)} \label{table_spanning_set_polytope_Dcan_3vertices}
	\begin{tabular}{l}
		$ F(\Delta) = \langle (0,0,0),(1,1/3,0),(1,2/3,0),(1,1/2,-1/2) \rangle $ \\
		$p:=(-4,-2,1), a_2:=(-2,-1,0), c_2:=(-2,-1,1), b_1:=(-1,-1,1),$ \\
		$d_1:=(-1,0,1), a_1:=(0,0,-1), 0:=(0,0,0), c_1:=(0,0,1),$ \\
		$ab:=(1,0,0), bc:=(1,0,1), ad:=(1,1,0), cd:=(1,1,1)$, \\
		$b:=(2,0,1), a:=(2,1,-2), ac_1:=(2,1,-1), ac_2:=(2,1,0),$ \\
		$c:=(2,1,1), d:=(2,2,1)$
	\end{tabular}
	\begin{tabular}{|l|l|l|l|l|}
		\hline
		ID & spanning set for   & number of & sing. of   & sing. of    \\ 
		& polytope  $\Delta$       & points in $\Delta$ &  $Z_{\tilde{\Delta}}$ & $Z_{F(\Delta)}$  \\
		\hline
		547444 & $\Delta_{can}, p $ & 18  & $3 A_2$ & $3A_2$   \\ \hline
		474457 & $\Delta_{can}, a_2,c_2,d_1,b_1$ & 17 & $2A_2$ & $3A_2$        \\ \hline
		$\Rightarrow$ 545932 & $\Delta_{can}, a_2,c_2$ & 15 & &   \\ \hline
		$\Rightarrow$ 532384 & $\Delta_{can}, a_2,c_2,d_1$ & 16 & &   \\  \hline
		$\Rightarrow$ 532606 & $\Delta_{can}, a_2,d_1,b_1$ & 16 & &   \\  \hline
		483109 & $\Delta_{can},d_1,b_1,c_2,a_1$ & 16  & 
		$A_2, \, 3A_1 $ & $A_5,A_2$        \\ \hline
		534669 & $\Delta_{can}, c_2, d_1,a_1$ & 15 & $2A_2, \, A_1$ & $A_5,A_2$         \\ \hline
		534866 & $\Delta_{can}, b_1,a_1,d_1$ & 15 & $3A_2, \, A_1$ & $E_6,A_2$         \\ \hline
		534667 & $\Delta_{can}, c_2, d_1, b_1$ & 15  &$ 3A_2$ & $A_8$         \\ \hline
		546062 & $ \Delta_{can}, b_1,a_2 $ & 15 & $2 A_2, \, A_1$ & $3A_2$   \\ \hline
		546205 & $ \Delta_{can}, a_1, c_2 $ & 14 & $A_3, A_2 $ & $A_5,A_2$       \\ \hline
		546219 & $\Delta_{can}, c_1,a_2$ & 14 & $2A_2$ & $3A_2$         \\ \hline
		$\Rightarrow$ 547524 & $\Delta_{can}, a_2$ & 11 & & \\ \hline
		$\Rightarrow$ 546863 & $\Delta_{can}, a_2, bc$ & 12 & &  \\ \hline
		$\Rightarrow$ 539063 & $\Delta_{can}, a_2, bc,cd$ & 13 & &   \\ \hline
		536498 & $\Delta_{can}, b_1,ad,c_2$ & 14 & $A_3, A_2$ & $A_8$         \\ \hline
		537834 & $\Delta_{can}, ab,ad,c_2$ & 13 & $A_4$ & $A_8$         \\ \hline
		$\Rightarrow$ 547525 & $\Delta_{can}, c_2$ & 11 & &  \\ \hline
		$\Rightarrow$ 546862 & $\Delta_{can},ab,c_2$ & 12& & \\ \hline
		$\Rightarrow$ 546663 & $\Delta_{can}, ad,c_2$ & 12 & & \\ \hline

	\end{tabular}
\end{table}

\begin{table} 
	\caption{polytopes and Fine interior in the class $b)$ sorted as in Figure \ref{figure_complete_list_polytopes_b} from top to bottom, left to right. (with the same convention as in Table \ref{table_spanning_set_polytope_Dcan_3vertices})}. \label{table_spanning_set_polytope_Dcan_4vertices}
	\begin{tabular}{l}
		$F(\Delta) = \langle (0,0,0),(1,-1,1/2),(1,-2/3,1/3),(1,-1/2,1/2),(1,-2/3,2/3) \rangle $\\
		$p:=(-4,3,-2), c_2:=(-2,2,-1), a_2:=(-2,1,-1), b_1:=(-1,1,0)$ \\
		$d_1:=(-1,1,-1), 0:=(0,0,0), a_1:=(0,-1,0), c_1:=(0,1,0), cd:= (1,0,0),$ \\
		$ad:=(1,-1,0), ab:=(1,-1,1), bc:=(1,0,1), ac_2:= (2,-1,1), $ \\
		$ac_1:= (2,-2,1), d:= (2,-1,0), c:=(2,0,1), a:=(2,-3,1), b:= (2,-1,2)$ \\ \\
	\end{tabular}
	\begin{tabular}{|l|l|l|l|l|}
		\hline
		ID & spanning set for   & number of  & sing. of & sing. of  \\ 
		& polytope  $\Delta$       & points in $\Delta$ &  $Z_{\tilde{\Delta}}$ & $Z_{F(\Delta)}$  \\
		\hline
		545317 & $\Delta_{can},p$ & 18 & $3A_1$ & $3A_1$ 	  \\ \hline
		354912 & $\Delta_{can},c_2,a_2,d_1,b_1$ & 17 & $2A_1$ & $A_2, 2A_1$         \\ \hline
		$\Rightarrow$ 533513 & $\Delta_{can}, c_2,a_2$ & 15 & &  \\ \hline
		$\Rightarrow$ 481575 & $\Delta_{can}, c_2,a_2,d_1$ & 16 & &  \\ \hline
		372528 & $\Delta_{can},d_1,b_1,c_2,a_1$ & 16 & $3A_1$ & $A_4, A_1$          \\ \hline
		372973 & $\Delta_{can},b_1,d_1,a_2,c_1$ & 16 & $4A_1$ & $A_3, 2A_1$          \\ \hline
		$\Rightarrow$ 490511 & $\Delta_{can},b_1,d_1,a_2$ & 15 & &   \\ \hline
		388701 & $\Delta_{can},a_1,d_1,b_1,c_1$ & 15 & $4A_1$ & $D_5, A_1$          \\ \hline
		$\Rightarrow$ 499287 & $\Delta_{can}, a_1,d_1,b_1$ & 14 & &  \\ \hline
		490485 & $\Delta_{can},c_1,a_2,d_1$ & 15 & $3A_1$ & $A_3, 2A_1$         \\ \hline
		490481 & $\Delta_{can}, c_2,b_1,d_1$ & 15 & $ 2A_2$ & $A_6$          \\ \hline
		490478 & $\Delta_{can}, d_1,c_2,a_1$ & 15  & $3A_1$ & $A_4, A_1$         \\ \hline
		535952 & $\Delta_{can}, a_2, c_1$ & 14 & $3A_1$ & $A_3, 2A_1$        \\ \hline
		536013 & $\Delta_{can}, a_1,c_2$ & 14 & $A_2, \, A_1$ & $A_4, A_1$         \\ \hline
		495687 & $\Delta_{can},d_1,c_2,ab$ & 14 & $A_2, \, A_1$ & $A_6$         \\ \hline
		$\Rightarrow$ 539313 & $\Delta_{can},d_1,c_2$ & 13 & &   \\ \hline
		499291 & $\Delta_{can},c_1,b_1,d_1$ & 14 & $ A_3, 2A_1$ & $D_7$         \\ \hline
		$\Rightarrow$ 538356 & $\Delta_{can}, b_1, d_1$ & 13 & &   \\ \hline
		499470 & $\Delta_{can}, a_2,bc,d_1$ & 14 & $A_2, \, 2A_1$ & $A_4, 2A_1$       \\ \hline
		$\Rightarrow$ 539304 & $\Delta_{can},a_2,d_1$ & 13 & &  \\ \hline
		501298 & $\Delta_{can},c_2,ab,ad$ & 13 & $ A_2$ & $A_6$        \\ \hline
		$\Rightarrow$ 547246 & $\Delta_{can},c_2$ & 11 & & \\ \hline
		$\Rightarrow$ 540602 & $\Delta_{can}, c_2,ab$ & 12 &&  \\ \hline
		501330 & $\Delta_{can},a_2,bc,cd$ & 13  & $2A_1$ & $A_4, 2A_1$          \\ \hline
		$\Rightarrow$ 547240 & $\Delta_{can}, a_2$ & 11 & &   \\ \hline
		$\Rightarrow$ 540663 & $\Delta_{can}, a_2,bc$ & 12 & &   \\ \hline

	\end{tabular}
\end{table}

	\newpage

\end{document}